\documentclass[11pt, leqno]{amsart}
\usepackage{amsmath,amsthm,amssymb}
\usepackage{comment}
\usepackage{tabularx}
\usepackage{color}
\usepackage{ulem}

\newcommand{\Hom}{\mathrm{Hom}}
\newcommand{\wt}[1]{\mathrm{wt}(#1)}
\newcommand{\supp}[1]{\mathrm{supp}(#1)}
\newcommand{\Ker}[1]{\mathrm{Ker}\, #1}

\newcommand{\rank}[1]{\mathrm{rank}\, #1}
\renewcommand{\Im}[1]{\mathrm{Im}\, #1}

\newcommand{\greektwo}{I\hspace{-1.2pt}I}
\newcommand{\greekthree}{I\hspace{-1.2pt}I\hspace{-1.2pt}I}
\newcommand{\greekfour}{I\hspace{-1.2pt}V}

\newtheorem{thm}{Theorem}[section]
\newtheorem{cor}[thm]{Corollary}
\newtheorem{lem}[thm]{Lemma}
\newtheorem{prop}[thm]{Proposition}
\newtheorem{conj}[thm]{Conjecture}
\theoremstyle{definition}
\newtheorem*{rem}{Remark}

\numberwithin{equation}{section}

\title[Jacobi polynomials for the first-order GRM codes]
{Jacobi polynomials for the first-order generalized Reed--Muller codes}

\author{Ryosuke Yamaguchi*}
\address{School of Fundamental Science and Engineering,
Waseda University,
Tokyo 169--8555, Japan
}
\email{ryosuke.yama.821@fuji.waseda.jp}

\keywords{Generalized Reed--Muller code, 
combinatorial $t$-design,
Jacobi polynomial}

\subjclass[2010]{Primary 94B05; Secondary 05B05}

\begin{document}
\begin{abstract}
  In this paper, we give the Jacobi polynomials for 
  first-order generalized Reed--Muller codes.
We show as a corollary the nonexistence of combinatorial $3$-designs in these codes.
\end{abstract}
\maketitle

\section{Introduction}
There is growing interest in the designs derived from codes within the fields of coding theory and design theory.
In \cite{Ozeki,BMP}, a criterion was provided for determining whether a shell of a code constitutes a $t$-design, using 
Jacobi polynomials (see Proposition \ref{prop:design-Jacobi}, the definition of the Jacobi polynomials will be given in Section \ref{section:def-of-Jacobi}). 
Using this criterion, in \cite{BMP,CMOT}, they presented 
$t$-designs derived from Type \greektwo, \greekthree, and \greekfour codes with short lengths. Additionaly, 
Miezaki and Munemasa \cite{MM} provided Jacobi polynomials for the first-order Reed--Muller codes. As a corollary, they 
showed the nonexistence of combinatorial $4$-designs in these codes. 
The purpose of the present paper is to give a generalization of Miezaki and Munemasa's results.

Let $m$ be a positive integer and $q$ be a prime power, and set $V = \mathbb{F}_q^m$. 
The first-order generalized Reed--Muller (GRM) code $RM_q(1, m)$ is defined as the subspace of $\mathbb{F}_q^V$
consisting of affine linear functions:
\begin{equation*}
  RM_q(1, m) = \left\{(\lambda(x) + b)_{x \in V} \in \mathbb{F}_q^V \mid \lambda \in V^*, b \in \mathbb{F}_q\right\},
\end{equation*}
where $V^* = \Hom(V, \mathbb{F}_q)$.
We remark that the weight enumerator of $RM_q(1, m)$ is 
\begin{equation}\label{weight}
  x^{q^m} + (q^{m + 1} - q)x^{q^{m - 1}}y^{(q - 1)q^{m - 1}} + (q - 1)y^{q^m}.
\end{equation}

Let $C = RM_q(1, m)$ and $C_\ell := \{ c \in C \mid \wt{c} = \ell \}$. 
In this paper, we call $C_\ell$ a shell of the code $C$ whenever it is non-empty.
We show shells of $C$ are combinatorial $2$-designs but are not combinatorial $3$-designs by using Jacobi polynomials.
More precisely, the set $\mathcal{B}(C_\ell) := \{ \supp{x} \mid x \in C_\ell\}$ forms the set of blocks of a combinatorial $2$-design but does not form a $3$-design.
Herein, we always assume that a combinatorial $t$-design allows the existence of repeated blocks, and we exclude the
trivial design $\mathcal{D} = (\Omega, \mathcal{B})$ where $\Omega = \{1,\dots,n\}$ and $\mathcal{B} = \{\Omega,\dots,\Omega\}$.

\begin{rem}
  In \cite{MN}, Miezaki and Nakasora provided the first non-trivial examples of a code whose shells are $t$-designs for all weights and 
  whose shells are $t'$-designs for some weights with some $t' > t$. 
  Therefore, it is important to determine the value $t$ such that 
  all shells of a code are $t$-designs, and no shell is a $t+1$-design,
  if such a $t$ exists. 
\end{rem}

First, we provide Jacobi polynomials for $C$ with $T$, where $|T| = 2$.

\begin{thm}\label{thm:2-design}
  Let $C=RM_q(1, m)$ and $T = \{0, u\} \in \binom{V}{2}$. Then, 
  \begin{align*}
    J_{C, T}(w, z, x, y) =&
      w^2x^{q^m - 2} + (q^{m - 1} - 1)w^2x^{q^{m-1} - 2}y^{(q-1)q^{m-1}} \\
      &+ 2(q-1)q^{m-1}wzx^{q^{m-1}-1}y^{(q-1)q^{m-1}-1}\\
      &+ (q-1)(q^m-q^{m-1}-1)z^2x^{q^{m-1}}y^{(q-1)q^{m-1}-2} \\
      &+ (q-1)z^2y^{q^m - 2}.
  \end{align*}
\end{thm}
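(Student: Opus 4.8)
The plan is to compute $J_{C,T}$ directly from its definition by sorting the $q^{m+1}$ codewords of $C$ according to their behaviour on the two coordinates of $T$. Writing each codeword as $c=(\lambda(x)+b)_{x\in V}$ with $(\lambda,b)\in V^*\times\mathbb{F}_q$, recall that the Jacobi polynomial records for every $c$ the four statistics
\begin{align*}
  a_0(c) &= |\{x\in T : c_x = 0\}|, & a_1(c) &= |\{x\notin T : c_x = 0\}|,\\
  b_0(c) &= |\{x\in T : c_x \neq 0\}|, & b_1(c) &= |\{x\notin T : c_x \neq 0\}|,
\end{align*}
so that $J_{C,T}(w,z,x,y)=\sum_{c\in C} w^{a_0(c)}z^{b_0(c)}x^{a_1(c)}y^{b_1(c)}$. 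Since $T=\{0,u\}$, the only values needed at the distinguished coordinates are $c_0=\lambda(0)+b=b$ and $c_u=\lambda(u)+b$, so the pair $(a_0,b_0)$ is governed entirely by whether $b=0$ and whether $\lambda(u)=-b$.

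The key simplification is that the outside-$T$ exponents are forced once the inside-$T$ pattern and the global weight are known. Indeed $a_1(c)+b_1(c)=q^m-2$ always, and by the weight enumerator \eqref{weight} the total number of zeros of $c$ is $q^m$ if $\lambda=0=b$, equal to $0$ if $\lambda=0\neq b$, and exactly $q^{m-1}$ whenever $\lambda\neq 0$ (a nonzero functional is surjective with every fibre of size $q^{m-1}$). Hence $a_1(c)=(\text{total zeros})-a_0(c)$ is determined. I would first dispose of the two constant families $\lambda=0$: the single word $(\lambda,b)=(0,0)$ contributes $w^2x^{q^m-2}$, and the $q-1$ words with $\lambda=0,\ b\neq 0$ contribute $(q-1)z^2y^{q^m-2}$, giving the first and last terms.

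It then remains to treat the words with $\lambda\neq 0$, each of which has exactly $q^{m-1}$ zeros and weight $(q-1)q^{m-1}$, and to split them by the four possible patterns of $(c_0,c_u)$ on $T$. The counting rests on the single fact that, for any fixed $a\in\mathbb{F}_q$, the set $\{\lambda\in V^*:\lambda(u)=a\}$ is a coset of the hyperplane $u^{\perp}=\{\lambda:\lambda(u)=0\}$ and so has exactly $q^{m-1}$ elements (here $u\neq 0$ because $|T|=2$). From this one reads off: $c_0=c_u=0$ forces $b=0,\ \lambda(u)=0,\ \lambda\neq 0$, giving $q^{m-1}-1$ words and the term $(q^{m-1}-1)w^2x^{q^{m-1}-2}y^{(q-1)q^{m-1}}$; the two mixed patterns $c_0=0\neq c_u$ and $c_0\neq 0=c_u$ each give $(q-1)q^{m-1}$ words, which combine into the single term $2(q-1)q^{m-1}wz\,x^{q^{m-1}-1}y^{(q-1)q^{m-1}-1}$; and $c_0\neq 0\neq c_u$ gives the remaining words. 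The step that needs the most care is this last pattern, where for each of the $q-1$ choices of $b\neq 0$ one must count $\lambda$ with $\lambda(u)\neq -b$ and $\lambda\neq 0$, namely $q^m-q^{m-1}-1$ of them, since the functional $\lambda=0$ already satisfies $\lambda(u)=0\neq -b$ and so must be excluded; this yields $(q-1)(q^m-q^{m-1}-1)$ words and the $z^2$-term. A final check that the six cardinalities sum to $q^{m+1}$ confirms that no codeword has been missed, and collecting the monomials gives the stated formula.
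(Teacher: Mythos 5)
Your proof is correct and rests on the same key fact as the paper's (the fibres of the evaluation map $\lambda\mapsto\lambda(u)$ on $V^*$ all have size $q^{m-1}$ since $u\neq 0$); the only difference is bookkeeping: the paper routes the count through Lemma~\ref{lem:b-to-a}, converting the functional counts $b_{i,j}$ of Lemma~\ref{lem:2-design} into the codeword counts $a_i$ of \eqref{eq:Jacobi}, whereas you enumerate the pairs $(\lambda,b)$ by their value pattern on $T$ directly. Your final consistency check that the six cardinalities sum to $q^{m+1}=|C|$ is a nice touch not present in the paper.
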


Using this theorem, we show that the shells of $RM_q(1, m)$ and $RM_q(1, m)^\perp$ are $2$-designs.

\begin{cor}\label{cor:2-design}
  Let $C=RM_q(1,m)$. Then for any $\ell \in \mathbb{N}$,
  $C_\ell$ is a combinatorial $2$-design.
  Similarly, $(C^\perp)_\ell$ is a combinatorial $2$-design.
\end{cor}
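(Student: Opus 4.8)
The plan is to deduce the corollary from the Jacobi--polynomial criterion recorded in Proposition \ref{prop:design-Jacobi}: the shells $C_\ell$ form $t$-designs for every $\ell$ exactly when $J_{C,T}$ depends only on the cardinality $|T|$, as $T$ ranges over subsets of $V$ with $|T|\le t$. For $t=2$ the case $|T|=0$ is nothing but the weight enumerator \eqref{weight}, which is manifestly independent of $T$, so it remains to handle $|T|=1$ and $|T|=2$.

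The key step is to produce a sufficiently transitive group of coordinate permutations stabilizing the code. For an affine map $\phi(x)=Ax+c$ with $A\in\mathrm{GL}(m,q)$ and $c\in V$, precomposition $f\mapsto f\circ\phi^{-1}$ sends an affine function $x\mapsto\lambda(x)+b$ to the affine function $x\mapsto(\lambda\circ A^{-1})(x)+\bigl(b-\lambda(A^{-1}c)\bigr)$; hence the induced permutation of the coordinate set $V$ preserves $C=RM_q(1,m)$, so that $\mathrm{AGL}(m,q)\le\mathrm{Aut}(C)$. This group is $2$-transitive on $V$: given two pairs of distinct points $(p_1,p_2)$ and $(q_1,q_2)$, one selects $A\in\mathrm{GL}(m,q)$ with $A(p_1-p_2)=q_1-q_2$ and sets $c=q_1-Ap_1$. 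Since an automorphism carrying $T$ to $T'$ gives $J_{C,T}=J_{C,T'}$, the $1$- and $2$-transitivity force $J_{C,T}$ to depend only on $|T|$ for $|T|\le 2$. In particular, every $T\in\binom{V}{2}$ is the translate $\phi(\{0,u\})$ of a set of the form $\{0,u\}$ under $\phi(x)=x+a$, so by Theorem \ref{thm:2-design} the polynomial $J_{C,T}$ equals the single value displayed there for all two-element $T$. Proposition \ref{prop:design-Jacobi} then yields that each shell $C_\ell$ is a $2$-design.

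For the dual code I would argue identically. Coordinate permutations preserve the standard bilinear form on $\mathbb{F}_q^V$, so every permutation automorphism of $C$ is also an automorphism of $C^\perp$; thus $\mathrm{AGL}(m,q)\le\mathrm{Aut}(C^\perp)$ acts $2$-transitively on $V$ as well. The same reasoning shows that the Jacobi polynomials of $C^\perp$ depend only on $|T|$ for $|T|\le 2$, whence $(C^\perp)_\ell$ is a $2$-design for every $\ell$ by Proposition \ref{prop:design-Jacobi}.

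The only real content lies in the $2$-transitivity of the induced action on $V$ together with the verification that $\mathrm{AGL}(m,q)$ genuinely stabilizes $C$; once these are in place the design property is a formal consequence of Proposition \ref{prop:design-Jacobi}, while Theorem \ref{thm:2-design} provides the explicit Jacobi polynomial, and hence the design parameters, for the unique orbit of $2$-subsets. I anticipate no serious obstacle beyond routine bookkeeping, in particular the harmless exclusion of empty shells, which are already ruled out by our definition of a shell.
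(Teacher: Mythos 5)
Your proof is correct, but it follows a different route from the paper's. The paper deduces the corollary from translation invariance alone (Proposition \ref{prop:Jacobi-inv}) combined with the explicit computation of Theorem \ref{thm:2-design}: since every $2$-subset translates to one of the form $\{0,u\}$ and the displayed polynomial visibly does not depend on $u$, the criterion of Proposition \ref{prop:design-Jacobi} applies; the dual code is then handled by pushing the (now $T$-independent) Jacobi polynomial through the MacWilliams-type identity of Theorem \ref{thm:dual-code}. You instead establish that $\mathrm{AGL}(m,q)$ sits inside $\mathrm{Aut}(C)$ and acts $2$-transitively on the coordinate set $V$, which forces $J_{C,T}$ to depend only on $|T|$ without any computation, and you treat the dual by observing that permutation automorphisms of $C$ are automorphisms of $C^\perp$. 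This is exactly the alternative the paper itself flags in the remark following its proof (there attributed to the $2$-transitivity of the automorphism group, citing Berger--Charpin). Your argument is more conceptual and makes Theorem \ref{thm:2-design} logically unnecessary for the design property (you only use it to name the common value), whereas the paper's route stays within the machinery it has already built and, via Theorem \ref{thm:dual-code}, also produces the explicit Jacobi polynomial of the dual rather than just its $T$-independence. Both are complete; the only caveat, shared with the paper, is the degenerate shell $\ell=q^m$, whose blocks are all of $\Omega$ and which the paper's conventions exclude as the trivial design.
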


Second, we provide Jacobi polnomials for $C$ with $T$, where $|T| = 3$.

\begin{thm}\label{thm:3-design}
  Let $C=RM_q(1, m)$, $T=\{0, u_1, u_2\} \in \binom{V}{3}$, and $A = {}^t[u_1\ u_2]$.
  \begin{enumerate}
    \item If $\rank{A} = 2$, then
    \begin{align*}
      J_{C, T}(w, z, x, y) =& 
        w^3x^{q^m - 3} 
        + (q^{m - 2} - 1) w^3x^{q^{m - 1} - 3}y^{(q - 1)q^{m - 1}} \\
        &+ 3q^{m - 2}(q - 1) w^2zx^{q^{m - 1} - 2}y^{(q - 1)q^{m - 1} - 1} \\
        &+ 3q^{m - 2}(q - 1)^2 wz^2x^{q^{m - 1} - 1}y^{(q - 1)q^{m - 1} - 2} \\
        &+ (q - 1)(q^m - 2q^{m - 1} + q^{m - 2} - 1) z^3x^{q^{m - 1}}y^{(q - 1)q^{m - 1} - 3} \\
        &+ (q - 1)z^3y^{q^m - 3}.
    \end{align*}
    \item If $\rank{A} = 1$, then
    \begin{align*}
      J_{C, T}(w, z, x, y) =& 
        w^3x^{q^m - 3} 
        + (q^{m - 1} - 1) w^3x^{q^{m - 1} - 3}y^{(q - 1)q^{m - 1}} \\
        &+ 3q^{m - 1}(q - 1) wz^2x^{q^{m - 1} - 1}y^{(q - 1)q^{m - 1} - 2} \\
        &+ (q - 1)(q^m - 2q^{m - 1} - 1) z^3x^{q^{m - 1}}y^{(q - 1)q^{m - 1} - 3} \\
        &+ (q - 1)z^3y^{q^m - 3}.
    \end{align*}
  \end{enumerate}
\end{thm}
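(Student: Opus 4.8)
The plan is to compute $J_{C,T}$ directly from its definition as the sum, over the codewords $c = c_{\lambda,b}$ with $c_{\lambda,b}(x) = \lambda(x) + b$, of the monomial $w^{m_0(c)} z^{m_1(c)} x^{n_0(c)} y^{n_1(c)}$, where $m_0(c), m_1(c)$ count the coordinates of $c$ indexed by $T$ that are zero, resp.\ nonzero, and $n_0(c), n_1(c)$ do the same for the coordinates outside $T$. Since $m_0 + m_1 = 3$ and $m_1 + n_1 = \wt{c}$, each monomial is determined by the pair $(m_1(c), \wt{c})$. The weight of $c_{\lambda,b}$ takes only the three values $0$ (when $\lambda = 0,\ b = 0$), $q^m$ (when $\lambda = 0,\ b \neq 0$), and $(q-1)q^{m-1}$ (when $\lambda \neq 0$), as recorded in \eqref{weight}. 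Thus the zero codeword contributes $w^3 x^{q^m - 3}$, the $q-1$ nonzero constants each contribute $z^3 y^{q^m - 3}$, and every $c$ with $\lambda \neq 0$ contributes $w^{j} z^{3-j} x^{q^{m-1}-j} y^{(q-1)q^{m-1}-3+j}$, where $j = m_0(c)$ is the number of zeros among $c(0), c(u_1), c(u_2)$. The whole computation therefore reduces to counting, for $j = 0,1,2,3$, the pairs $(\lambda, b)$ with $\lambda \neq 0$ for which exactly $j$ of $c(0), c(u_1), c(u_2)$ vanish.

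To organize this count I would introduce the linear map
\[
  \Phi \colon V^* \times \mathbb{F}_q \to \mathbb{F}_q^3, \qquad
  \Phi(\lambda, b) = \bigl(b,\ \lambda(u_1) + b,\ \lambda(u_2) + b\bigr),
\]
whose value is exactly the triple $(c(0), c(u_1), c(u_2))$. Since $\Phi$ is linear, every nonempty fibre has the same size $|\Ker{\Phi}| = q^{m+1}/|\Im{\Phi}|$, so the number of $(\lambda, b)$ whose triple has exactly $j$ zero entries equals $|\Ker{\Phi}|$ times the number of vectors of $\Im{\Phi}$ with exactly $j$ zero coordinates. The image is spanned by the all-ones vector $(1,1,1)$ (coming from $b$) together with the vectors $(0, \lambda(u_1), \lambda(u_2))$ (coming from $\lambda$ with $b = 0$); the span of the latter has dimension $\rank{A}$, and since $(1,1,1)$ is independent of it, $\dim \Im{\Phi} = \rank{A} + 1$. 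This is precisely the origin of the two cases: $\dim\Im{\Phi} = 3$ when $\rank{A} = 2$, and $\dim\Im{\Phi} = 2$ when $\rank{A} = 1$.

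It then remains to tally the vectors of $\Im{\Phi}$ by their number of vanishing coordinates and multiply by $|\Ker{\Phi}|$. When $\rank{A} = 2$ we have $\Im{\Phi} = \mathbb{F}_q^3$ and $|\Ker{\Phi}| = q^{m-2}$, so the number of $(\lambda,b)$ with exactly $j$ zeros is $q^{m-2}\binom{3}{j}(q-1)^{3-j}$. When $\rank{A} = 1$, writing $u_2 = \alpha u_1$ with $\alpha \neq 0,1$, the image is the plane $\{(a,\,a+s,\,a+\alpha s)\}$ and $|\Ker{\Phi}| = q^{m-1}$; a short check shows that no vector of this plane has exactly two zero coordinates (any two vanishing coordinates force the third), that $3(q-1)$ vectors have exactly one zero, and that $(q-1)(q-2)$ are nowhere zero. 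Finally, isolating the two degenerate $\lambda = 0$ codewords—removing $1$ from the $j=3$ count and $q-1$ from the $j=0$ count—yields the coefficients for $\lambda \neq 0$, which after routine simplification (for instance $(q-1)^3 q^{m-2} - (q-1) = (q-1)(q^m - 2q^{m-1} + q^{m-2} - 1)$ in case (1)) match the stated polynomials. I expect no genuine obstacle; the only point needing care is the linear-algebra bookkeeping of $\Im{\Phi}$ when $\rank{A} = 1$, where the coincidences among the vanishing conditions are exactly what cause the $w^2 z$ term to vanish.
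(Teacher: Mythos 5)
Your proposal is correct: I checked all the fibre counts, and in both cases the resulting coefficients agree with the stated polynomials (for instance, in the rank-$1$ case your tally $1,\ 0,\ 3(q-1),\ (q-1)(q-2)$ of vectors of the plane $\{(a,\,a+s,\,a+\alpha s)\}$ by number of zero coordinates, scaled by $q^{m-1}$ and corrected for the constant codewords, reproduces $a_0,\dots,a_3$ exactly). The underlying idea is the same as the paper's --- reduce everything to counting preimages of a linear evaluation map and then counting vectors of its image with a prescribed zero pattern --- but your packaging is genuinely different. The paper decomposes $C$ into the cosets $V^*+j\mathbf{1}$, counts $b_{i,j}=|\{c\in V^*\mid n_{j,T}(c)=i\}|$ via the map $A\colon V\to\mathbb{F}_q^2$ (evaluation at $u_1,u_2$ only, since elements of $V^*$ vanish at $0$), and then passes to the $a_i$ through the reflection identity of Lemma~\ref{lem:b-to-a}. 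You instead absorb the constant term into a single linear map $\Phi\colon V^*\times\mathbb{F}_q\to\mathbb{F}_q^3$ evaluating at all three points of $T$, so that the count is one uniform fibre computation with $\dim\Im{\Phi}=\rank{A}+1$, and Lemma~\ref{lem:b-to-a} is not needed at all. What your route buys is transparency --- the case split visibly comes from $\dim\Im{\Phi}$, and the vanishing of the $w^2z$ term in case (2) is explained structurally (two vanishing coordinates force the third) --- whereas the paper's $b_{i,j}$ bookkeeping is set up once and reused verbatim for $|T|=2,3,4$, which is why it pays off across Sections~\ref{sec:2-design}--\ref{sec:4-design}. Either way the proof is complete; the only step you should write out in full in a final version is the explicit enumeration of the zero patterns in the rank-$1$ plane, which you have correctly sketched.
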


We show, as a corollary, the nonexistence of combinatorial $3$-designs in these codes.

\begin{cor}\label{cor:3-design}
  Let $C=RM_q(1,m)$. If $q \geq 3$ and $m \geq 2$, then for any $\ell \in \mathbb{N}$,
  $C_\ell$ is not a combinatorial $3$-design.
\end{cor}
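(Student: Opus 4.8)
The plan is to combine the design criterion of Proposition~\ref{prop:design-Jacobi} with the two-case computation of Theorem~\ref{thm:3-design}. Recall that in $J_{C,T}(w,z,x,y)$ the coefficient of a monomial $w^{m_0}z^{m_1}x^{n_0}y^{n_1}$ counts the codewords $c \in C$ having $m_1$ nonzero and $m_0$ zero coordinates inside $T$ and $n_1$ nonzero and $n_0$ zero coordinates outside $T$. In particular, for the shell of weight $\ell$ the number of blocks of $\mathcal{B}(C_\ell)$ that contain a given $3$-set $T$ — that is, the codewords with $T \subseteq \supp{c}$ and $\wt{c}=\ell$, equivalently $m_1 = 3$ — is exactly the coefficient of $z^3 x^{q^m-\ell} y^{\ell-3}$ in $J_{C,T}$. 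Hence $\mathcal{B}(C_\ell)$ can be a $3$-design only if this coefficient is independent of $T$, and to prove the corollary it suffices to exhibit, for one weight $\ell$, two $3$-sets for which it differs.

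First I would produce the two required configurations from the hypotheses. Since the code is invariant under translations of $V$, every $3$-set can be moved to one containing $0$, so by Theorem~\ref{thm:3-design} the Jacobi polynomial of $\{0,u_1,u_2\}$ is controlled entirely by $\rank{A}$, where $A={}^t[u_1\ u_2]$. Because $m \geq 2$, there exist linearly independent $u_1,u_2 \in V$, giving a set $T$ with $\rank{A}=2$. Because $q \geq 3$, the set $\mathbb{F}_q \setminus \{0,1\}$ is non-empty, so taking $u_1 \neq 0$ and $u_2 = c\,u_1$ with $c \in \mathbb{F}_q \setminus \{0,1\}$ produces three distinct collinear points and a set $T'$ with $\rank{A}=1$. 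Thus both ranks genuinely occur under the stated hypotheses, and Theorem~\ref{thm:3-design} applies to each set.

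Next I would compare the two polynomials. Reading off the coefficient of $z^3 x^{q^{m-1}} y^{(q-1)q^{m-1}-3}$ from Theorem~\ref{thm:3-design}, the rank-$2$ set $T$ gives $(q-1)(q^m-2q^{m-1}+q^{m-2}-1)$, while the rank-$1$ set $T'$ gives $(q-1)(q^m-2q^{m-1}-1)$; these differ by $(q-1)q^{m-2}$, which is strictly positive when $q \geq 3$ and $m \geq 2$. By the first paragraph this monomial records the number of blocks of the shell $C_\ell$ with $\ell=(q-1)q^{m-1}$ through a given $3$-set, so that number is not constant and $C_\ell$ is not a $3$-design.

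Finally I would dispatch the remaining shells. By the weight enumerator~\eqref{weight} the only nonzero weights of $C$ are $(q-1)q^{m-1}$ and $q^m$, so for $\ell \in \mathbb{N}$ the only other non-empty shell is $C_{q^m}$, which consists of the nonzero constant codewords; each of these has support equal to all of $V$, so $\mathcal{B}(C_{q^m})$ is the excluded trivial design. Hence no shell of $C$ is a combinatorial $3$-design. I expect the only delicate points to be the exact identification of the block-count with the named Jacobi coefficient and the verification that both rank configurations arise precisely when $q \geq 3$ and $m \geq 2$ (and not for $q=2$ or $m=1$, consistent with the Reed--Muller case); the quantitative heart of the matter is already supplied by Theorem~\ref{thm:3-design}.
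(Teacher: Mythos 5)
Your proposal is correct and follows essentially the same route as the paper: both compare the coefficient of $z^3x^{q^m-\ell}y^{\ell-3}$ in $J_{C,T}$ for a rank-$2$ triple and a rank-$1$ triple (which exist exactly because $m\geq 2$ and $q\geq 3$), the paper doing so by computing the full difference $J_{C,T_1}-J_{C,T_2}=-q^{m-2}(q-1)x^{q^{m-1}-3}y^{(q-1)q^{m-1}-3}(wy-xz)^3$ and you by reading off the two coefficients directly. Your explicit treatment of the shell $C_{q^m}$ as the excluded trivial design is in fact slightly more careful than the paper's blanket statement that the relevant coefficient of the difference is nonzero ``whenever $C_\ell$ is non-empty.''
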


Using Theorem~\ref{thm:3-design}, we show that $C_\ell$ is a 
$3$-$(v, k, (\lambda_1, \lambda_2))$-design (see Section \ref{section:def-of-design}).

\begin{cor}\label{cor:3-gen-design}
  Let $C = RM_q(1, m)$ and $\ell = (q - 1)q^{m-1}$. Then, $C_\ell$ is a combinatorial
  $3$-$(v, k, (\lambda_1, \lambda_2))$-design, where
  \begin{align*}
    v &= q^m,\ k = \ell = (q - 1)q^{m-1},\\
    \lambda_1 &= (q - 1)(q^m - 2q^{m - 1} + q^{m - 2} - 1),\\
    \lambda_2 &= (q - 1)(q^m - 2q^{m - 1} - 1).
  \end{align*}
\end{cor}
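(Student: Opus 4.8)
The plan is to read the corollary directly off Theorem~\ref{thm:3-design}, after translating "number of blocks through a triple" into the extraction of a single coefficient of the Jacobi polynomial. Recall from Section~\ref{section:def-of-Jacobi} that $J_{C,T}(w,z,x,y)=\sum_{c\in C}w^{a_0(c)}z^{a_1(c)}x^{b_0(c)}y^{b_1(c)}$, where $a_0(c),a_1(c)$ count the zero and nonzero coordinates of $c$ inside $T$, and $b_0(c),b_1(c)$ those outside $T$. A codeword $c\in C_\ell$ satisfies $T\subseteq\supp{c}$ precisely when $a_0(c)=0$ and $a_1(c)=|T|=3$; for such $c$ the condition $\wt{c}=\ell=(q-1)q^{m-1}$ forces $b_1(c)=\ell-3=(q-1)q^{m-1}-3$ and $b_0(c)=q^m-\ell=q^{m-1}$. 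Hence the number of $c\in C_\ell$ with $T\subseteq\supp{c}$ is exactly the coefficient of $z^3x^{q^{m-1}}y^{(q-1)q^{m-1}-3}$ in $J_{C,T}$. Since each support $\supp{c}$ arises from exactly $q-1$ codewords (the nonzero scalar multiples $\alpha c$, $\alpha\in\mathbb{F}_q^\ast$, all cut out the same affine-hyperplane complement), this coefficient equals, under the convention allowing repeated blocks, the number of blocks of $\mathcal{B}(C_\ell)$ containing $T$. One also records $v=|V|=q^m$ and that every block has size $k=\ell$.

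Next I would argue that this coefficient depends only on the affine type of $T$. The code $C$ is invariant under the affine group $\mathrm{AGL}(m,q)$: a translation $x\mapsto x+t$ and a linear map $x\mapsto Mx$ each send an affine function $\lambda(x)+b$ to another affine function, so they permute the coordinates of $\mathbb{F}_q^V$ while preserving $C$ and all weights. Consequently $J_{C,T}=J_{C,\sigma T}$ for every $\sigma\in\mathrm{AGL}(m,q)$, so the coefficient above is constant on $\mathrm{AGL}(m,q)$-orbits of $3$-subsets of $V$. Translating a given $T=\{t_0,t_1,t_2\}$ by $-t_0$ brings it to the normalized form $\{0,u_1,u_2\}$ with $u_i=t_i-t_0$ used in Theorem~\ref{thm:3-design}, and the invariant $\rank{A}$ with $A={}^t[u_1\ u_2]$ is independent of the chosen base point: it equals $2$ when the three points are affinely independent (non-collinear) and $1$ when they are collinear. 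Thus $\binom{V}{3}$ partitions into the class of non-collinear triples and the class of collinear triples.

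Finally I would substitute. By Theorem~\ref{thm:3-design}(1), for a non-collinear triple the coefficient of $z^3x^{q^{m-1}}y^{(q-1)q^{m-1}-3}$ is $(q-1)(q^m-2q^{m-1}+q^{m-2}-1)=\lambda_1$, and by Theorem~\ref{thm:3-design}(2), for a collinear triple it is $(q-1)(q^m-2q^{m-1}-1)=\lambda_2$. Hence every $T\in\binom{V}{3}$ lies in $\lambda_1$ or $\lambda_2$ blocks according to its class, which is exactly the asserted $3$-$(v,k,(\lambda_1,\lambda_2))$-design in the sense of Section~\ref{section:def-of-design}, with the stated $v,k,\lambda_1,\lambda_2$.

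I expect the only delicate points to be bookkeeping rather than substance. First, one must check that the support-to-codeword multiplicity is uniformly $q-1$, so that "number of codewords through $T$" and "number of blocks through $T$" agree with the normalization under which $\lambda_1,\lambda_2$ are stated. Second, one must verify that the rank classification of a triple is well defined independently of which point is sent to the origin, i.e.\ that non-collinearity is an affine invariant of the unordered triple. If one further wishes to stress that this is a genuinely two-valued design rather than an ordinary $3$-design, one notes $\lambda_1-\lambda_2=(q-1)q^{m-2}$ and that both classes are non-empty, which occurs exactly when $q\ge 3$ and $m\ge 2$; this is the observation linking the corollary to Corollary~\ref{cor:3-design}.
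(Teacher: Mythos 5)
Your proof is correct and takes the same route as the paper, which disposes of this corollary with the single line ``It is clear from Theorem~\ref{thm:3-design}''; you have simply made explicit the routine bookkeeping that line suppresses (extracting the coefficient of $z^3x^{q^{m-1}}y^{(q-1)q^{m-1}-3}$, the $(q-1)$-fold support multiplicity, and the affine invariance of the collinear/non-collinear dichotomy). No gaps.
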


Third, we provide Jacobi polnomials for $C$ with $T$, where $|T| = 4$.

\begin{thm}\label{thm:4-design}
  Let $C=RM_q(1, m)$, $T=\{0, u_1, u_2, u_3\} \in \binom{V}{4}$, and $A = {}^t[u_1\ u_2\ u_3]$.
  \begin{enumerate}
    \item If $\rank{A} = 3$ then, 
          \begin{align*}
            J_{C, T}(w, z, x, y) = &w^4 x^{q^m - 4} + (q^{m-3} - 1)w^4x^{q^{m - 1} - 4}y^{(q - 1)q^{m - 1}} \\
            &+ 4q^{m - 3}(q - 1)w^3zx^{q^{m - 1} - 3}y^{(q - 1)q^{m - 1} - 1} \\
            &+ 6(q - 1)^2q^{m - 3}w^2z^2x^{q^{m - 1} - 2}y^{(q - 1)q^{m - 1} - 2} \\
            &+ 4q^{m - 3}(q - 1)^3wz^3x^{q^{m - 1} - 1}y^{(q - 1)q^{m - 1} - 3} \\
            &+ (q - 1)(q^m - 3q^{m - 1} + 3q^{m - 2} - q^{m - 3} - 1)z^4x^{q^{m - 1}}y^{(q - 1)q^{m - 1} - 4} \\
            &+ (q - 1)z^4y^{q^m - 4}.
          \end{align*}
    \item If $\rank{A} = 2$ then, 
          \begin{align*}
            J_{C, T}(w, z, x, y) = &w^4 x^{q^m - 4} + (q^{m - 2} - 1)w^4x^{q^{m - 1} - 4}y^{(q - 1)q^{m - 1}} \\
            &+ q^{m - 2}(q - 1) w^3zx^{q^{m - 1} - 3}y^{(q - 1)q^{m - 1} - 1} \\
            &+ 3q^{m - 2} (q - 1) w^2z^2x^{q^{m - 1} - 2}y^{(q - 1)q^{m - 1} - 2} \\
            &+ q^{m - 2}(q - 1)(4q - 5) wz^3x^{q^{m - 1} - 1}y^{(q - 1)q^{m - 1} - 3} \\
            &+ (q - 1)(q^m - 3q^{m - 1} + 2q^{m - 2} - 1) z^4x^{q^{m - 1}}y^{(q - 1)q^{m - 1} - 4} \\
            &+ (q - 1)z^4y^{q^m - 4}
          \end{align*}
          or
          \begin{align*}
            J_{C, T}(w, z, x, y) = &w^4 x^{q^m - 4} + (q^{m - 2} - 1)w^4x^{q^{m - 1} - 4}y^{(q - 1)q^{m - 1}} \\
            &+ 6q^{m - 2}(q - 1)w^2z^2x^{q^{m - 1} - 2}y^{(q - 1)q^{m - 1} - 2} \\
            &+ q^{m - 2}(q - 1)(4q - 8) wz^3x^{q^{m - 1} - 1}y^{(q - 1)q^{m - 1} - 3} \\
            &+ (q - 1)(q^m - 3q^{m - 1} + 3q^{m - 2} - 1) z^4x^{q^{m - 1}}y^{(q - 1)q^{m - 1} - 4} \\
            &+ (q - 1)z^4y^{q^m - 4}.
          \end{align*}
    \item If $\rank{A} = 1$ then, 
          \begin{align*}
            J_{C, T}(w, z, x, y) = &w^4 x^{q^m - 4} + (q^{m - 1} - 1)w^4x^{q^{m - 1} - 4}y^{(q - 1)q^{m - 1}} \\
            &+ 4q^{m - 1}(q - 1)wz^3x^{q^{m - 1} - 1}y^{(q - 1)q^{m - 1} - 3} \\
            &+ (q - 1)(q^m - 3q^{m - 1} - 1) z^4x^{q^{m - 1}}y^{(q - 1)q^{m - 1} - 4} \\
            &+ (q - 1)z^4y^{q^m - 4}.
          \end{align*}
  \end{enumerate}
\end{thm}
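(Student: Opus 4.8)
The plan is to compute $J_{C,T}$ directly from its definition
\[
J_{C,T}(w,z,x,y)=\sum_{c\in C} w^{m_0(c)}z^{m_1(c)}x^{n_0(c)}y^{n_1(c)},
\]
where $m_0(c),m_1(c)$ count the coordinates in $T$ on which $c$ vanishes, resp.\ does not, and $n_0(c),n_1(c)$ do the same outside $T$. Writing each codeword as $c=\lambda+b$ with $\lambda\in V^{*}$ and $b\in\mathbb F_q$, I first peel off the constant words $\lambda=0$: the word $b=0$ contributes $w^{4}x^{q^m-4}$ and the $q-1$ nonzero constants contribute $(q-1)z^{4}y^{q^m-4}$, which are the two extreme monomials in every case. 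Each remaining word ($\lambda\neq0$) has weight $(q-1)q^{m-1}$, its zero set $Z=\{x:\lambda(x)=-b\}$ is an affine hyperplane of size $q^{m-1}$, and its monomial is determined solely by $m_0=|T\cap Z|$; call it $P_{m_0}$, so $P_j=w^{j}z^{4-j}x^{q^{m-1}-j}y^{(q-1)q^{m-1}-4+j}$. Thus the whole problem reduces to computing the five numbers $N_j=\#\{(\lambda,b):\lambda\neq0,\ |T\cap Z|=j\}$, $j=0,\dots,4$.

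Next I would linearize the count. Putting $s_0=\lambda(0)=0$ and $s_i=\lambda(u_i)$, a point of $T$ lies in $Z$ exactly when its $s$-value equals $-b$, so for fixed $\lambda$ the sum over $b\in\mathbb F_q$ records the partition statistics of the multiset $\mu(\lambda)=\{0,\lambda(u_1),\lambda(u_2),\lambda(u_3)\}$: a value $v=-b$ of multiplicity $n_v$ yields one word with $m_0=n_v$. The distribution of $(\lambda(u_1),\lambda(u_2),\lambda(u_3))$ as $\lambda$ ranges over $V^{*}$ is governed by the image $W$ of the evaluation map $\lambda\mapsto(\lambda(u_1),\lambda(u_2),\lambda(u_3))$, a subspace of $\mathbb F_q^{3}$ of dimension $r=\rank{A}$, each point of which is attained exactly $q^{m-r}$ times. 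Hence each $N_j$ is $q^{m-r}$ times a count of partition types of $\mu$ over $s\in W$, and the problem becomes pure linear algebra in $\mathbb F_q^{3}$.

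Rather than enumerate all five types, I would compute $N_4,N_3,N_2,N_0$ and recover $N_1$ from $\sum_j N_j=q^{m+1}-q$, which is forced by the weight enumerator \eqref{weight}. Here $N_4=q^{m-r}-1$ (the kernel minus $\lambda=0$), $N_3=q^{m-r}C_{31}$ with $C_{31}$ the number of type-$(3{+}1)$ points of $W$, and $N_2=q^{m-r}(2C_{22}+C_{211})$. The key device is the coincidence identity
\[
\sum_{\{a,b\}}\big|W\cap\{t_a=t_b\}\big| \;=\; 6C_4+3C_{31}+2C_{22}+C_{211},\qquad C_4=1,
\]
summed over the six pairs of $T$: since the points of $T$ are distinct and nonzero, each of the six functionals $s_i,\ s_i-s_j$ is nonzero on $W$, so every left-hand term equals $q^{r-1}$, which pins down $2C_{22}+C_{211}$ in terms of $C_{31}$ alone. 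Likewise $N_0$ follows from $\sum_{s\in W}(\#\text{distinct values of }\mu)$, evaluated by inclusion--exclusion. The upshot is that $J_{C,T}$ depends only on $r$ and on $C_{31}$.

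Finally I would run the three cases. For $r=3$ one has $W=\mathbb F_q^{3}$ and $C_{31}=4(q-1)$. For $r=1$ all $u_i$ are scalar multiples of one vector with distinct nonzero ratios, so $\mu$ has type $1{+}1{+}1{+}1$ whenever $\lambda(u_1)\neq0$, collapsing at once to case (3). The delicate case is $r=2$: writing $W=\{s:\alpha\cdot s=0\}$ with $\alpha$ the coefficient vector of the unique relation $\sum_i\alpha_i u_i=0$, one finds
\[
C_{31}=(q-1)\big([\alpha_1{=}0]+[\alpha_2{=}0]+[\alpha_3{=}0]+[\alpha_1{+}\alpha_2{+}\alpha_3{=}0]\big),
\]
and distinctness of the $u_i$ forces at most one of these four brackets to hold, so $C_{31}\in\{q-1,0\}$, producing exactly the two displayed polynomials. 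I expect the rank-$2$ analysis to be the main obstacle: one must check that the finer invariants $C_{22}$ and $C_{211}$, which genuinely differ among rank-$2$ configurations, always enter only through the combination fixed by the coincidence identity, so that $C_{31}$ (equivalently, whether two of the $u_i$ are proportional or the three $u_i$ are collinear, versus neither) is the sole remaining invariant.
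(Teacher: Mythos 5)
Your proposal is correct, and its skeleton --- peel off the $q$ constant words, note that a non-constant word's monomial depends only on $|T\cap Z|$, and push the count down to the image $W=\Im{A}$ of the evaluation map, over which every fibre has size $q^{m-\rank{A}}$ --- is exactly the paper's setup (Section~\ref{subsec:notations} together with (\ref{eq:b_ij})--(\ref{eq:b_i0})). Where you genuinely diverge is in the combinatorial core. The paper computes the counts $b_{i,j}$ value by value: for $\rank{A}=3$ and $1$ this is immediate, while for $\rank{A}=2$ it normalizes a basis of $\Im{A}$ to $\{{}^t[1\ 0\ a],{}^t[0\ 1\ b]\}$ and runs an explicit case analysis on whether $a+b=1$, $ab=0$, or neither, recovering $a_2$ at the end from the total $|C\setminus\mathbb{F}_q\mathbf{1}|=q^{m+1}-q$. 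You instead classify points of $W$ by the partition type of the multiset $\{0,s_1,s_2,s_3\}$ and use the pair-coincidence identity $\sum_{\{a,b\}}|W\cap\{t_a=t_b\}|=6q^{r-1}=6C_4+3C_{31}+2C_{22}+C_{211}$ (each of the six functionals $s_i$, $s_i-s_j$ is nonzero on $W$ precisely because the points of $T$ are distinct and nonzero), together with the triple-coincidence count $4C_4+C_{31}$ needed for the inclusion--exclusion giving $N_0$, to show every $N_j$ is a function of $r$ and $C_{31}$ alone. Your formula for $C_{31}$ as $(q-1)$ times the number of the four special lines contained in $W=\{\alpha\cdot s=0\}$, plus the observation that distinctness of the $u_i$ forces at most one of the four conditions to hold, reproduces the paper's dichotomy: $C_{31}=q-1$ is the paper's ``$a+b=1$ or $ab=0$'' case (two of the $u_i$ proportional, or the three affinely collinear) and $C_{31}=0$ is the complement. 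I checked that your identities yield the stated coefficients in all three ranks (e.g.\ $N_2=q^{m-2}\left(6(q-1)-3C_{31}\right)$ gives $3q^{m-2}(q-1)$ or $6q^{m-2}(q-1)$ for $r=2$, and $N_0=q^{m-r}\left(q^{r+1}-4q^r+6q^{r-1}-3-C_{31}\right)-(q-1)$ matches every $a_4$). What your route buys is a conceptual explanation of why only two rank-$2$ polynomials occur --- the finer invariants $C_{22}$, $C_{211}$, $C_{1111}$ are always eliminated by the coincidence identities --- at the price of setting those identities up; the paper's normal-form computation is longer but entirely mechanical.
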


By this theorem, we show that $C_\ell$ is a combinatorial 
$4$-$(v,k,(\lambda_1,\lambda_2,\lambda_3,\lambda_4))$-design.

\begin{cor}\label{cor:4-gen-design}
  Let $C = RM_q(1, m)$ and $\ell = (q-1)q^{m-1}$. Then $C_\ell$ is a $4$-$(v, k, (\lambda_1,\lambda_2,\lambda_3,\lambda_4))$-design, where
  \begin{align*}
    v &= q^m, \ k = \ell = (q-1)q^{m-1},\\
    \lambda_1 &=  (q - 1)(q^m - 3q^{m - 1} + 3q^{m - 2} - q^{m - 3} - 1), \\
    \lambda_2 &= (q - 1)(q^m - 3q^{m - 1} + 2q^{m - 2} - 1), \\
    \lambda_3 &= (q - 1)(q^m - 3q^{m - 1} + 3q^{m - 2} - 1),\\
    \lambda_4 &= (q - 1)(q^m - 3q^{m - 1} - 1).
  \end{align*}
\end{cor}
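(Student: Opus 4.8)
The plan is to extract the four covering numbers directly from the Jacobi polynomials of Theorem~\ref{thm:4-design}, via the standard dictionary between the monomials of $J_{C,T}$ and the combinatorial counts defining a design (the criterion of Proposition~\ref{prop:design-Jacobi}). First I would record that, because blocks are counted with multiplicity and each block is the support of a codeword, the number $\lambda_T$ of blocks of $\mathcal{B}(C_\ell)$ containing a fixed $4$-subset $T$ is exactly the number of $c \in C_\ell$ with $T \subseteq \supp{c}$. Writing $J_{C,T}(w,z,x,y) = \sum_{c \in C} w^{m_0(c)} z^{m_1(c)} x^{n_0(c)} y^{n_1(c)}$, where $m_0(c),m_1(c)$ count the zero and nonzero coordinates of $c$ inside $T$ and $n_0(c),n_1(c)$ those outside $T$, the condition $T \subseteq \supp{c}$ is precisely $m_0(c) = 0$, i.e.\ $m_1(c) = |T| = 4$. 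Hence $\lambda_T$ is the coefficient of the $w^0 z^4$ monomial of total weight $\ell$ in $J_{C,T}$.

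Next I would specialize to $\ell = (q-1)q^{m-1}$. A codeword $c \in C_\ell$ with $T \subseteq \supp{c}$ satisfies $m_1(c) = 4$, $n_1(c) = \ell - 4$, and $n_0(c) = q^m - \ell = q^{m-1}$, so the relevant monomial is $z^4 x^{q^{m-1}} y^{(q-1)q^{m-1} - 4}$. Reading off its coefficient in each case of Theorem~\ref{thm:4-design} gives $\lambda_T = \lambda_1$ when $\rank{A} = 3$, $\lambda_T \in \{\lambda_2,\lambda_3\}$ in the two subcases of $\rank{A} = 2$, and $\lambda_T = \lambda_4$ when $\rank{A} = 1$, matching the values in the statement. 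Since $RM_q(1,m)$ is invariant under the full affine group of $V$, the polynomial $J_{C,T}$—and thus $\lambda_T$—depends only on the affine-equivalence class of $T$; translating one point of $T$ to $0$, this class is governed by $\rank{A}$ together with the finer invariant separating the two rank-$2$ cases. Therefore $\lambda_T$ takes only the values $\lambda_1,\lambda_2,\lambda_3,\lambda_4$, and with $v = q^m$ and the common block size $k = \ell = (q-1)q^{m-1}$ this is exactly the assertion that $C_\ell$ is a $4$-$(v,k,(\lambda_1,\lambda_2,\lambda_3,\lambda_4))$-design.

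Most of the work sits in Theorem~\ref{thm:4-design}, so the corollary is largely bookkeeping; the delicate point is the well-definedness of the type of $T$. I would need to check that $\rank{A}$, and the rank-$2$ subcase, are genuine invariants of the unordered set $T$—independent of which element is sent to $0$ and of the ordering of the remaining three—and, if the definition in Section~\ref{section:def-of-design} demands that every listed value be realized, that each of the four types actually occurs (forcing in particular $m \geq 3$, consistent with the $q^{m-3}$ appearing in $\lambda_1$). I expect the geometric description of the two rank-$2$ subcases—three of the four points of $T$ collinear versus a genuine planar quadrilateral—to be the only step needing an argument beyond reading coefficients.
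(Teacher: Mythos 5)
Your argument is correct and is essentially the paper's own proof, which simply reads the coefficient of $z^4x^{q^{m-1}}y^{(q-1)q^{m-1}-4}$ off the cases of Theorem~\ref{thm:4-design} (the paper's proof is a one-line appeal to Lemmas~\ref{lem:4-design-rank3}, \ref{lem:4-design-rank1}, and \ref{lem:4-design-rank2}). Your worry about well-definedness of the ``type'' of $T$ is not actually needed: the definition of a $4$-$(v,k,(\lambda_1,\dots,\lambda_4))$-design only requires that each $4$-subset be contained in $\lambda_i$ blocks for \emph{some} $i$, and Proposition~\ref{prop:Jacobi-inv} plus Theorem~\ref{thm:4-design} already guarantee that every $T$ yields one of the four listed coefficients.
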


This paper is organized as follows. 
In Section~\ref{sec:pre}, 
we define and give some basic properties of codes, 
combinatorial $t$-designs, and
Jacobi polynomials
used in this paper.
In Sections~\ref{sec:2-design}, \ref{sec:3-design}, and \ref{sec:4-design}, 
we show Theorems~\ref{thm:2-design}, \ref{thm:3-design}, and \ref{thm:4-design}, 
respectively.

All computer calculations reported in this paper were carried out using 
{\sc Magma}~\cite{Magma} and {\sc Mathematica}~\cite{Mathematica}. 

\section{Preliminaries}\label{sec:pre}
In this section, we give definitions and some properties of codes, combinatorial designs, and Jacobi polynomials.
We mainly refer to \cite{MM} and \cite{CMOT}.

\subsection{Codes and combinatorial $t$-designs.}\label{section:def-of-design}
Let $q$ be a prime power. A  $q$-ary linear code $C$ of length $n$ is a linear subspace of $\mathbb{F}_q^n$. 
The dual code of $C$ is the set of vectors which are orthogonal to any codewords in $C$: 
$C^{\perp} := \{x \in \mathbb{F}_q^n \mid x \cdot c = 0\ \mathrm{for\ all\ } c \in C\}$.
For $c \in \mathbb{F}_q^n$, the weight $\wt{c}$ is the number of its nonzero components.
The shell of a weight-$\ell$ is the set of codewords whose weight is $\ell$: $C_\ell :=
\{ c \in C \mid \wt{c} = \ell \}$.

A combinatorial $t$-design is a pair $\mathcal{D} = (\Omega, \mathcal{B})$, where $\Omega$ is a set of points of 
cardinality $v$, and $\mathcal{B}$ is a collection of $k$-element subsets of $\Omega$ called blocks, 
with the property that any $t$-element subset of $\Omega$ is contained in precisely $\lambda$ blocks.
Recall \cite{BMP} for the definitions of various types of designs. 
A pair $\mathcal{D} = (\Omega, \mathcal{B})$ is a combinatorial design with parameters $t$-$(v, k, (\lambda_1,\dots,\lambda_N))$ 
if $\mathcal{B}$ is a collection of $k$-element subsets of $\Omega$ called blocks and every $t$-element subset of $\Omega$ is contained 
in $\lambda_i$ blocks. Note that for $N=1$, the design coincides exactly with a $t$-design.

The support of a vector $x := (x_1,\dots,x_n)$, $x_i \in \mathbb{F}_q$, is the set of 
indices of its nonzero coordinates: $\supp{x} = \{ i \mid x_i \neq 0\}$. Let $\Omega := \{1,\dots,n\}$ and 
$\mathcal{B}(C_\ell):= \{\supp{x}\mid x \in C_\ell\}$. Then for a code $C$ of length $n$, we say that the shell $C_\ell$
is a combinatorial $t$-design if $(\Omega, \mathcal{B}(C_\ell))$ is a combinatorial $t$-design.
Similarly, we say that the shell $C_\ell$ is a combinatorial $t$-$(v, k, (\lambda_1,\dots,\lambda_N))$-design  
if $(\Omega, \mathcal{B}(C_\ell))$ is a combinatorial $t$-$(v, k, (\lambda_1,\dots, \lambda_N))$-design.

\subsection{Jacobi poynomials.}\label{section:def-of-Jacobi}
Let $C$ be a $q$-ary code of length $n$ and $T \subset [n] := \{1,\dots,n\}$. Then the
Jacobi polynomial of $C$ with $T$ is defined as follows \cite{Ozeki}: 
\begin{equation*}
  J_{C, T}(w, z, x, y) := \sum_{c \in C} w^{m_0(c)}z^{m_1(c)}x^{n_0(c)}y^{n_1(c)},
\end{equation*}
where for $c = (c_1,\dots,c_n)$,
\begin{align*}
  m_0(c) &= |\{ j \in T \mid c_j = 0\}|,\\
  m_1(c) &= |\{ j \in T \mid c_j \neq 0 \}|,\\
  n_0(c) &= |\{ j \in [n] \setminus T \mid c_j = 0\}|,\\
  n_1(c) &= |\{ j \in [n] \setminus T \mid c_j \neq 0\}|.
\end{align*}
The Jacobi polynomial of the dual code is written as follows. 
\begin{thm}[Theorem 4 of \cite{Ozeki}]\label{thm:dual-code}
  Let $C$ be a $q$-ary code of length $n$ and $T \subset [n]$. Then we have
  \begin{equation*}
    J_{C^\perp, T}(w, z, x, y) = \frac{1}{|C|}J_{C, T}(w + (q - 1)z, w - z, x + (q - 1)y, x - y).
  \end{equation*}
\end{thm}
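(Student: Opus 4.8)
The plan is to prove this as an instance of the finite-field Poisson summation formula, the character-theoretic engine behind every MacWilliams-type identity, with the four variable substitutions arising as single-coordinate Fourier transforms. First I would rewrite the Jacobi polynomial as a sum over the code of a product function. Fix a nontrivial additive character $\chi$ of $\mathbb{F}_q$, and for $c = (c_1,\dots,c_n) \in \mathbb{F}_q^n$ define
\[
  f(c) = \prod_{j \in T} g(c_j) \prod_{j \in [n]\setminus T} h(c_j),
\]
where $g(a) = w$ and $h(a) = x$ when $a = 0$, while $g(a) = z$ and $h(a) = y$ when $a \neq 0$. Reading off the exponents directly against the definitions of $m_0, m_1, n_0, n_1$ shows that $J_{C,T}(w,z,x,y) = \sum_{c \in C} f(c)$, and likewise $J_{C^\perp, T}(w,z,x,y) = \sum_{c \in C^\perp} f(c)$.

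Second, I would establish the Poisson summation formula
\[
  \sum_{c \in C^\perp} f(c) = \frac{1}{|C|} \sum_{c \in C} \hat{f}(c), \qquad \hat{f}(c) := \sum_{x \in \mathbb{F}_q^n} \chi(x \cdot c)\, f(x).
\]
This follows by interchanging the order of summation and invoking the orthogonality relation $\sum_{c \in C} \chi(x \cdot c) = |C|$ if $x \in C^\perp$ and $0$ otherwise, which in turn holds because $c \mapsto x \cdot c$ is either the zero functional on $C$ or a surjection onto $\mathbb{F}_q$ taking each value equally often.

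Third, and this is where the specific substitutions appear, I would use that $f$ is a product of single-coordinate functions, so $\hat{f}$ factors as $\hat{f}(c) = \prod_{j \in T}\hat{g}(c_j)\prod_{j \notin T}\hat{h}(c_j)$. A one-line character computation, using $\sum_{a \in \mathbb{F}_q}\chi(ab) = 0$ for $b \neq 0$, gives $\hat{h}(b) = x + (q-1)y$ when $b = 0$ and $\hat{h}(b) = x - y$ when $b \neq 0$, and symmetrically $\hat{g}(b) = w + (q-1)z$ or $w - z$. Hence for each $c \in C$,
\[
  \hat{f}(c) = (w+(q-1)z)^{m_0(c)}(w-z)^{m_1(c)}(x+(q-1)y)^{n_0(c)}(x-y)^{n_1(c)},
\]
which is exactly the summand of $J_{C,T}$ after the substitution $(w,z,x,y) \mapsto (w+(q-1)z,\, w-z,\, x+(q-1)y,\, x-y)$. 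Summing over $c \in C$ and dividing by $|C|$ yields the claimed identity.

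There is no serious obstacle: the argument is the standard weighted MacWilliams transform, and every step is a finite, explicit character-sum computation. The only points demanding care are bookkeeping, namely verifying that the exponents of $f(c)$ reproduce $m_0, m_1, n_0, n_1$, and tracking which code plays the role of the summation domain in Poisson summation so as to confirm the normalization $1/|C|$ rather than $1/|C^\perp|$. Since the statement is quoted as Theorem 4 of \cite{Ozeki}, I would ultimately cite it, but the self-contained derivation above is short enough to reproduce if a reader prefers.
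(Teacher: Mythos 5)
Your proof is correct. Note that the paper itself contains no proof of this statement---it imports it verbatim as Theorem 4 of \cite{Ozeki}---so there is no internal argument to compare against; your Poisson-summation derivation is the standard one and is, in substance, the proof in Ozeki's paper (and the classical MacWilliams-identity argument specialized to the two-block weight structure induced by $T$). All three steps check out: $f$ does reproduce the summand $w^{m_0(c)}z^{m_1(c)}x^{n_0(c)}y^{n_1(c)}$, the orthogonality relation gives Poisson summation with the correct normalization $1/|C|$ (summing $\hat f$ over $C$ picks out $x \in C^\perp$), and the single-coordinate transforms $\hat g$, $\hat h$ yield exactly the four substitutions. Two points you should make explicit if writing this up: the orthogonality step $\sum_{c \in C}\chi(x \cdot c) = 0$ for $x \notin C^\perp$ uses the linearity of $C$, which the paper assumes throughout but the theorem statement's phrase ``$q$-ary code'' leaves implicit; and for non-prime $q$ the existence of a nontrivial additive character should be justified, e.g.\ by taking $\chi = \psi \circ \mathrm{Tr}_{\mathbb{F}_q/\mathbb{F}_p}$ with $\psi$ nontrivial on $\mathbb{F}_p$. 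With those remarks added, the argument is complete and self-contained.
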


Clearly, we have the following relation between Jacobi polynomials and combinatorial designs.
\begin{prop}\label{prop:design-Jacobi}
  Let $C$ be a linear code. $C_\ell$ is a combinatorial $t$-design if and only if 
  the coefficient of $z^tx^{n-\ell}y^{\ell-t}$ in $J_{C, T}$ is independent of the choice of $T$ with 
  $|T| = t$. 
\end{prop}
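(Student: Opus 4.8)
The plan is to identify the coefficient in question with a concrete incidence count and then match it against the defining property of a $t$-design. First I would fix a subset $T \subseteq [n]$ with $|T| = t$ and ask which codewords $c \in C$ contribute to the monomial $z^t x^{n-\ell} y^{\ell-t}$ in $J_{C,T}$. Each $c$ contributes exactly one monomial $w^{m_0(c)} z^{m_1(c)} x^{n_0(c)} y^{n_1(c)}$, and the identities $m_0(c) + m_1(c) = |T| = t$ and $n_0(c) + n_1(c) = n - t$ hold for every $c$. Consequently a codeword contributes to $z^t x^{n-\ell} y^{\ell-t}$ precisely when $m_1(c) = t$, $n_0(c) = n - \ell$, and $n_1(c) = \ell - t$; in particular $m_1(c) = t$ forces $m_0(c) = 0$, so no codeword contributes a monomial carrying both $z^t$ and a positive power of $w$, and the coefficient is unambiguous.

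Next I would translate these conditions into statements about $c$ itself. The equality $m_1(c) = t = |T|$ means $c_j \neq 0$ for every $j \in T$, i.e. $T \subseteq \supp{c}$. Moreover $\wt{c} = m_1(c) + n_1(c) = t + (\ell - t) = \ell$, so $c \in C_\ell$; conversely, once $c \in C_\ell$ and $T \subseteq \supp{c}$, the values $n_0(c) = n - \ell$ and $n_1(c) = \ell - t$ follow automatically. Hence the coefficient of $z^t x^{n-\ell} y^{\ell-t}$ in $J_{C,T}$ equals
\[
  \left|\{\, c \in C_\ell \mid T \subseteq \supp{c} \,\}\right|.
\]

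It then remains to recognize this count as the number of blocks of $\mathcal{B}(C_\ell)$ containing $T$. Under the convention that repeated blocks are permitted, $\mathcal{B}(C_\ell)$ is the multiset that records $\supp{c}$ once for each $c \in C_\ell$, so the number of blocks containing $T$ is exactly $\left|\{\, c \in C_\ell \mid T \subseteq \supp{c} \,\}\right|$. By definition, $(\Omega, \mathcal{B}(C_\ell))$ with $\Omega = [n]$ is a combinatorial $t$-design if and only if this number is the same for every $t$-element subset $T$ of $\Omega$. Combining the two identifications, $C_\ell$ is a combinatorial $t$-design if and only if the coefficient of $z^t x^{n-\ell} y^{\ell-t}$ in $J_{C,T}$ is independent of the choice of $T$ with $|T| = t$, which is the asserted equivalence.

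The argument is essentially a dictionary between the monomial bookkeeping of $J_{C,T}$ and the incidence structure of $\mathcal{B}(C_\ell)$, so there is no genuine analytic obstacle. The only points that demand care are the correct reading of the exponents that single out the monomial (so that $m_1(c) = t$ indeed encodes $T \subseteq \supp{c}$) and the treatment of repeated blocks: since each support arises from the $q-1$ nonzero scalar multiples of a codeword, these must be counted \emph{with multiplicity} for the equivalence to hold.
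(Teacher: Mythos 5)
Your proof is correct: the paper states this proposition without proof (prefacing it only with ``Clearly''), and your argument is exactly the definitional unpacking that justifies it --- identifying the coefficient of $z^t x^{n-\ell}y^{\ell-t}$ with $|\{c \in C_\ell \mid T \subseteq \supp{c}\}|$ and matching that against the $t$-design condition, with the correct caveat that blocks must be counted with multiplicity. No discrepancy with the paper's (implicit) reasoning.
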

We remark that the Jacobi polynomials are invariant under the automorphisms of codes.
In particular, for $C = RM_q(1, m)$, a map $\varphi$ such that $\varphi(\lambda(x) + b') = \lambda(x) + b + b'$ is 
an automorphism on $C$. Then, we have the following result.
\begin{prop}\label{prop:Jacobi-inv}
  Let $C = RM_q(1, m)$, $T \subset V$, and $T' = T + v$, where $v \in V$.
  Then we have $J_{C, T} = J_{C, T'}$.
\end{prop}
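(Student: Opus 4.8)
The plan is to realize the translation $T \mapsto T' = T+v$ as the action on $T$ of a coordinate permutation that is an automorphism of $C$, and then to invoke the stated invariance of Jacobi polynomials under such automorphisms. Recall that the coordinates of $C$ are indexed by $V$, so $T, T' \subset V$ are sets of coordinate positions and a permutation of $V$ acts on them in the obvious way.

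First, I would fix the permutation $\tau_v$ of the index set $V$ given by $\tau_v(x) = x+v$, letting it act on $\mathbb{F}_q^V$ by $(\tau_v\cdot c)_x = c_{x-v}$. Applying $\tau_v$ to an arbitrary codeword $c=(\lambda(x)+b')_{x\in V}$ yields $(\tau_v\cdot c)_x = \lambda(x-v)+b' = \lambda(x) + (b'-\lambda(v))$, which is again an affine linear function of $x$ and hence lies in $C$; this is precisely the closure property recorded in the remark preceding the statement, namely that shifting the constant term (here by $-\lambda(v)$, i.e. applying the automorphism $\varphi$ with $b=-\lambda(v)$) keeps a codeword in $C$. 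Since $\tau_v$ is a bijection of the coordinates and $\tau_v\cdot C\subseteq C$, it follows that $\tau_v\cdot C = C$, so $\tau_v$ is a permutation automorphism of $C$, and by construction $\tau_v(T)=T+v=T'$.

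Second, I would verify the general principle that permutation automorphisms preserve Jacobi polynomials in the form $J_{C,\pi(T)} = J_{C,T}$ whenever $\pi\cdot C=C$. This is a direct reindexing of the defining sum: substituting $c=\pi\cdot c'$ (which runs bijectively over $C$), each of the four exponents is unchanged, since for instance $m_0(\pi\cdot c')$ computed over $\pi(T)$ counts the $j\in\pi(T)$ with $(\pi\cdot c')_j=c'_{\pi^{-1}(j)}=0$, and as $j$ ranges over $\pi(T)$ the index $\pi^{-1}(j)$ ranges over $T$, giving exactly $m_0(c')$ computed over $T$; the same holds for $m_1,n_0,n_1$. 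Applying this with $\pi=\tau_v$ gives $J_{C,T} = J_{C,\tau_v(T)} = J_{C,T'}$, as required. I expect no substantive obstacle here: the only points needing care are the bookkeeping of the permutation-action conventions, so that $\pi(T)$ rather than $\pi^{-1}(T)$ appears in the exponents, and the verification that $\tau_v$ genuinely stabilizes $C$, which rests entirely on the closure of affine functions under adding a constant.
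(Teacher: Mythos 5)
Your proposal is correct and follows essentially the same route as the paper, which simply remarks that Jacobi polynomials are invariant under code automorphisms and that the coordinate translation $x \mapsto x+v$ induces the automorphism $\lambda(x)+b' \mapsto \lambda(x)+b'-\lambda(v)$ of $RM_q(1,m)$. You have merely filled in the reindexing details that the paper leaves implicit, and your bookkeeping (using $\pi(T)$ rather than $\pi^{-1}(T)$, and checking $\tau_v\cdot C = C$) is accurate.
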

This fact is useful because it suffices to consider the case that $T$ contains zero.

\subsection{Notation}\label{subsec:notations}
We next introduce some notation. Let $C = RM_q(1, m)$, $T \subset V$, and $t = |T|$.
For $c = (\lambda(x) + b)_{x \in V} \in C$ and $u \in V$, the evaluation of $c$ at $u$ is denoted by $c(u)$, 
which equals $\lambda(u) + b$. 
Let $i \in \{0,1,\dots,t\}$, $j \in \mathbb{F}_q$, we define
\begin{gather*}
  n_{j, T}(c) := |\{u \in T \mid c(u) = j\}|, \\
  b_{i, j} := | \{c \in V^* \mid n_{j, T}(c) = i\} |,
\end{gather*}
and
\begin{gather*}
  a_i := | \{c \in C \setminus \mathbb{F}_q\mathbf{1} \mid \mathrm{wt}(c|_T) = i\} |, \\
  b_i := \sum_{j \in \mathbb{F}_q} b_{i, j} = \sum_{j \in \mathbb{F}_q} | \{c \in V^* \mid n_{j, T}(c) = i\} |.
\end{gather*}
Then, the Jacobi polynomial of $C$ with $T$ is written as 
\begin{equation}\label{eq:Jacobi}
  \begin{split}
  J_{C, T}(w, z, x, y) &= w^tx^{q^m-t} \\
  &+ \sum_{i = 0}^t a_i w^{t-i}z^ix^{q^{m-1}-(t-i)}y^{(q-1)q^{m-1}-i} \\
  &+ (q-1)z^ty^{q^m-t}. 
  \end{split}
\end{equation}
We have the following relation between $a_i$ and $b_i$.
\begin{lem}\label{lem:b-to-a}
  We have
  \begin{equation*}
    a_i = b_{t - i} - \delta_{i, 0} - (q - 1)\delta_{i, t}.
  \end{equation*}
\end{lem}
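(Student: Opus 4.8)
The plan is to set up an explicit bijection between the codewords counted by $a_i$ and the functionals counted by the $b_{t-i,j}$, and then to account for the single exceptional functional $\lambda = 0$, which produces the two Kronecker-delta corrections.

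First I would write every codeword $c \in C$ as $c = (\lambda(x) + b)_{x \in V}$ for a unique pair $(\lambda, b) \in V^* \times \mathbb{F}_q$; uniqueness holds because the evaluation map $(\lambda, b) \mapsto (\lambda(x)+b)_{x\in V}$ is linear with trivial kernel (evaluating at $x = 0$ forces $b = 0$, whence $\lambda = 0$). Under this identification the constant codewords $\mathbb{F}_q\mathbf{1}$ are exactly those with $\lambda = 0$, so $C \setminus \mathbb{F}_q\mathbf{1}$ corresponds to $\lambda \in V^*\setminus\{0\}$ together with an arbitrary $b \in \mathbb{F}_q$.

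Next I would rewrite $\wt{c|_T}$ in terms of the value-counting functions. Since $c(u) = \lambda(u) + b$, a point $u \in T$ fails to be a zero of $c$ precisely when $\lambda(u) \neq -b$, so $\wt{c|_T} = t - n_{-b, T}(\lambda)$. Hence $\wt{c|_T} = i$ is equivalent to $n_{-b, T}(\lambda) = t - i$. Substituting $j = -b$, which is a bijection of $\mathbb{F}_q$, gives
\begin{equation*}
  a_i = \sum_{j \in \mathbb{F}_q} |\{\lambda \in V^* \setminus \{0\} \mid n_{j, T}(\lambda) = t - i\}|.
\end{equation*}
By definition $b_{t-i} = \sum_{j \in \mathbb{F}_q} b_{t-i,j}$ counts the same functionals but over all of $V^*$, so $a_i$ equals $b_{t-i}$ minus the contribution of the zero functional summed over $j$.

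The only real point to check is this last correction term, which I expect to be the sole place requiring care. For $\lambda = 0$ one has $n_{j,T}(0) = t$ when $j = 0$ and $n_{j,T}(0) = 0$ when $j \neq 0$. Thus $\lambda = 0$ is counted in the $j = 0$ summand of $b_{t-i}$ exactly when $t - i = t$, i.e. $i = 0$, contributing $\delta_{i,0}$; and it is counted in each of the $q - 1$ summands with $j \neq 0$ exactly when $t - i = 0$, i.e. $i = t$, contributing $(q-1)\delta_{i,t}$ in total. Subtracting these yields $a_i = b_{t-i} - \delta_{i,0} - (q-1)\delta_{i,t}$, as claimed; everything else is the direct translation between codewords and functionals.
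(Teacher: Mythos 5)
Your proof is correct and follows essentially the same route as the paper: both decompose $C\setminus\mathbb{F}_q\mathbf{1}$ into the cosets $V^*+b\mathbf{1}$, translate $\wt{c|_T}=i$ into $n_{-b,T}(\lambda)=t-i$, and then subtract the contribution of the zero functional, which is counted once in the $j=0$ term when $i=0$ and $q-1$ times when $i=t$. The only cosmetic difference is that the paper computes $a_{t-i}$ in terms of $b_i$ and then reindexes, whereas you work directly with $a_i$.
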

\begin{proof}
  Since
  \begin{align*}
    a_{t-i} &= |\{c \in C \setminus \mathbb{F}_q \mathbf{1} \mid n_{0,  T}(c) = i\}| \\
          &= \sum_{j \in \mathbb{F}_q } |\{c \in (V^* + j\mathbf{1}) \setminus \{j\mathbf{1}\} \mid n_{0, T}(c) = i\}| \\
          &= \sum_{j \in \mathbb{F}_q } |\{c \in V^* \setminus\{0\} \mid n_{0, T}(c + j\mathbf{1}) = i \}| \\
          &= \sum_{j \in \mathbb{F}_q } |\{c \in V^* \setminus\{0\} \mid n_{-j, T}(c) = i \}| \\
          &= |\{ c \in V^* \setminus \{0\} \mid n_{0, T}(c) = i\} | + \sum_{j \in \mathbb{F}_q \setminus\{0\}} | \{ c \in V^* \setminus\{0\} \mid n_{-j, T}(c) = i \} | \\
          &= b_{i, 0} - \delta_{i, t} + \sum_{j \in \mathbb{F}_q \setminus \{0 \} } (b_{i, -j} - \delta_{i, 0}) \\
          &= \sum_{j \in \mathbb{F}_q} b_{i, -j} - \delta_{i, t} - (q - 1)\delta_{i, 0} \\
          &= \sum_{j \in \mathbb{F}_q} b_{i, j} - \delta_{i, t} - (q - 1)\delta_{i, 0} \\
          &= b_i - \delta_{i, t} - (q - 1)\delta_{i, 0},
  \end{align*}
  we obtain 
  \begin{align*}
    a_i &= b_{t - i} - \delta_{t - i, t} - (q - 1)\delta_{t - i, 0} \\
        &= b_{t - i} - \delta_{i, 0} - (q - 1)\delta_{i, t}. \\
  \end{align*}
\end{proof}

Using this lemma, we obtain $a_i$ by calculating $b_i$.

\section{Proofs of Theorem~\ref{thm:2-design} and Corollary~\ref{cor:2-design}}\label{sec:2-design}
In this section, we give proofs of Theorem~\ref{thm:2-design} and Corollary~\ref{cor:2-design} 
using the notation introduced in Section~\ref{subsec:notations}. First, we give a lemma to show Theorem~\ref{thm:2-design}.
Let $T = \{ 0, u \} \in \binom{V}{2}$.

\begin{lem}\label{lem:2-design}
  We have
  \begin{align*}
    b_0 &= q^{m - 1}(q - 1)^2, \\
    b_1 &= 2q^{m - 1}(q - 1), \\
    b_2 &= q^{m - 1}.
  \end{align*}
\end{lem}

\begin{proof}
Considering $u \in T$ as an element of $V^{**}$,   $u$ is 
a surjective linear map from $V^*$ to $\mathbb{F}_q$ because $u \neq 0$.
Then, for $j \in \mathbb{F}_q \setminus \{0\}$,  
\begin{gather*}
  b_{0, 0} = 0,\ b_{0, j} = \left| \bigcup_{a\in\mathbb{F}_q \setminus\{j\}}u^{-1}(a) \right| = (q-1)q^{m-1},\\
  b_{1, 0} = (q-1)q^{m-1},\ b_{1, j} = \left| u^{-1}(j) \right| = q^{m-1},\\
  b_{2, 0} = q^{m-1},\ b_{2, j} = 0.
\end{gather*}  
Therefore,
\begin{align*}
  b_0 &= b_{0,0} + \sum_{j\in\mathbb{F}_q\setminus\{0\}}b_{0,j} = (q-1)^2q^{m-1},\\
  b_1 &= b_{1,0} + \sum_{j\in\mathbb{F}_q\setminus\{0\}}b_{1,j} = 2(q-1)q^{m-1},\\
  b_2 &= b_{2,0} + \sum_{j\in\mathbb{F}_q\setminus\{0\}}b_{2,j} = q^{m-1}.
\end{align*}
\end{proof}

Using this Lemma, we show Theorem~\ref{thm:2-design}.

\begin{proof}[Proof of Theorem~\ref{thm:2-design}]
  Using Lemmas~\ref{lem:b-to-a} and \ref{lem:2-design},
  we obtain
  \begin{align*}
    a_0 &= b_2 - 1 = q^{m-1} - 1,\\
    a_1 &= b_1 = 2(q-1)q^{m-1},\\
    a_2 &= b_0 - (q-1) = (q-1)(q^m - q^{m-1} - 1).
  \end{align*}
  Thus, we obtain coefficients of the Jacobi polynomial by (\ref{eq:Jacobi}).
\end{proof}

Finally, we give a proof of Corollary~\ref{cor:2-design}.

\begin{proof}[Proof of Corollary~\ref{cor:2-design}]
  By Proposition~\ref{prop:Jacobi-inv}, it suffices to show that for any $T = \{ 0, u\} \subset \binom{V}{2}$, the coefficient of $z^2x^{q^m - \ell}y^{\ell-2}$ is the same value.
  This is true by using Theorem~\ref{thm:2-design}.
  In addition, we have the Jacobi polynomial of $RM_q(1, m)^\perp$ by using Theorem~\ref{thm:dual-code}. Thus, we obtain the desired results.
\end{proof}

\begin{rem}
Collorary \ref{cor:2-design} can be proved by $2$-transitivity of the automorphism group
of $RM_q(1, m)$, which is the general linear homogenious group \cite{Berger-Charpin}.
\end{rem}

\section{Proofs of Theorem~\ref{thm:3-design} and Corollary~\ref{cor:3-design}}\label{sec:3-design}
In this section, we give proofs of Theorem~\ref{thm:3-design} and Corollaries~\ref{cor:3-design} and \ref{cor:3-gen-design} 
using the notation introduced in Section~\ref{subsec:notations}.
First, we give two lemmas to show Theorem~\ref{thm:3-design}.
Let $T = \{0, u_1, u_2\}\in \binom{V}{3}$, $A = {}^t[u_1\ u_2]$.
\begin{lem}\label{lem:3-design-rank2}
If $\rank{A} = 2$, then
\begin{align*}
  b_0 &= q^{m - 2}(q - 1)^3, \\
  b_1 &= 3q^{m - 2}(q - 1)^2, \\
  b_2 &= 3q^{m - 2}(q - 1), \\
  b_3 &= q^{m - 2}.
\end{align*}
\end{lem}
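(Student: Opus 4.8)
The plan is to compute each $b_i = \sum_{j \in \mathbb{F}_q} b_{i,j}$ by counting, for each value $j \in \mathbb{F}_q$ and each count $i \in \{0,1,2,3\}$, the number of linear functionals $c \in V^*$ that take the value $j$ on exactly $i$ of the three points of $T = \{0, u_1, u_2\}$. Since $c(0) = 0$ automatically for every $c \in V^*$, the point $0 \in T$ contributes to $n_{j,T}(c)$ only when $j = 0$. This means the cases $j = 0$ and $j \neq 0$ must be treated separately, exactly as in Lemma~\ref{lem:2-design}.

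First I would view $u_1, u_2$ as elements of $V^{**}$, so that evaluation gives a linear map $\Phi \colon V^* \to \mathbb{F}_q^2$ sending $c \mapsto (c(u_1), c(u_2)) = (u_1(c), u_2(c))$. The hypothesis $\rank A = 2$ says precisely that $u_1, u_2$ are linearly independent, hence $\Phi$ is surjective with kernel of dimension $m - 2$; consequently every fiber $\Phi^{-1}(a_1, a_2)$ has exactly $q^{m-2}$ elements. With this in hand, the counting reduces to a purely combinatorial bookkeeping over the two coordinates $(c(u_1), c(u_2)) \in \mathbb{F}_q^2$, tracking how many of the three values $c(0) = 0$, $c(u_1)$, $c(u_2)$ equal a fixed $j$. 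For $j = 0$: I count configurations among $(c(u_1), c(u_2))$ with a prescribed number of zeros (noting $c(0) = 0$ always adds one to the count), giving $b_{3,0}$ from both coordinates zero ($1$ fiber), $b_{2,0}$ from exactly one coordinate zero ($2(q-1)$ fibers), and $b_{1,0}$ from neither coordinate zero ($(q-1)^2$ fibers); here $b_{0,0} = 0$ since $c(0) = 0$ forces at least one hit. For $j \neq 0$: now $c(0) = 0 \neq j$ never contributes, so I count among the two coordinates $(c(u_1), c(u_2))$ how many equal $j$, yielding $b_{2,j}$, $b_{1,j}$, $b_{0,j}$ with $b_{3,j} = 0$. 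Multiplying each configuration count by the fiber size $q^{m-2}$ and summing over $j$ gives the stated values.

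The only genuine subtlety, and the step I would handle most carefully, is the bookkeeping in the $j = 0$ case, where the forced zero at the point $0 \in T$ shifts every count by one relative to the naive two-coordinate count; this is the exact mechanism that made $b_{0,0} = 0$ and $b_{2,0} = q^{m-1}$ in the rank-$2$ length-two computation. I expect no real obstacle beyond assembling $b_i = b_{i,0} + \sum_{j \neq 0} b_{i,j}$ correctly: the rank-$2$ assumption guarantees uniform fiber sizes $q^{m-2}$, so all the work is in enumerating the $\mathbb{F}_q^2$-patterns and confirming the totals collapse to $q^{m-2}(q-1)^3$, $3q^{m-2}(q-1)^2$, $3q^{m-2}(q-1)$, and $q^{m-2}$, which one sanity-checks against $\sum_i b_i = q \cdot q^{m-1} = q^m = |V^*|$.
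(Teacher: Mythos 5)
Your proof is correct and follows essentially the same route as the paper: both view $c \mapsto (c(u_1), c(u_2))$ as a surjective linear map onto $\mathbb{F}_q^2$ with all fibers of size $q^{m-2}$, then enumerate the coordinate patterns separately for $j = 0$ (where $c(0)=0$ forces one hit) and $j \neq 0$, arriving at the same values of $b_{i,j}$. The only slip is in your closing sanity check: since $\sum_i b_{i,j} = |V^*| = q^m$ for each of the $q$ values of $j$, the totals must sum to $q^{m+1}$, not $q^m$ --- and indeed $q^{m-2}\bigl((q-1)+1\bigr)^3 = q^{m+1}$.
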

\begin{proof}
We remark that $A$ is a surjective map from $V$ to $\mathbb{F}_q^2$, 
and for all $ a, b \in \mathbb{F}_q, |A^{-1}({}^t[a\ b])| = |\Ker{A}| = q^{m - 2}$.
Then, for $j \in \mathbb{F}_q \setminus \{0\}$,
\begin{align*}
  b_{0, j} &= \left|\bigcup_{a,b\neq j} A^{-1}({}^t[a\ b])\right|
          =\sum_{a, b\in \mathbb{F}_q \setminus \{j\}} \left| A^{-1}({}^t [a\ b]) \right| 
          = (q - 1)^2q^{m - 2} ,\\
  b_{1, j} &= 2\times \left|\bigcup_{a\neq j} A^{-1} ({}^t[j\ a])\right|
          = 2(q - 1)q^{m - 2},\\
  b_{2, j} &= \left| A^{-1} ({}^t[j\ j]) \right|
          = q^{m - 2},\\
  b_{3, j} &= 0
\end{align*}
and 
\begin{align*}
  b_{0,0} &= 0,\\
  b_{1,0} &= \left|\bigcup_{a,b\neq 0} A^{-1}({}^t[a\ b])\right|
          =\sum_{a, b\in \mathbb{F}_q \setminus \{0\}} \left| A^{-1}({}^t [a\ b]) \right| 
          = (q - 1)^2q^{m - 2},\\
  b_{2,0} &=  2\times \left|\bigcup_{a\neq 0} A^{-1} ({}^t[j\ a])\right|
          = 2(q - 1)q^{m - 2},\\
  b_{3,0} &= \left| \Ker{A} \right| = q^{m - 2}.
\end{align*}
Since
\begin{equation*}
  b_i = b_{i, 0} + \sum_{j \in \mathbb{F}_q\setminus \{0\}}b_{i, j},
\end{equation*}
we obtain the desired results.
\end{proof}

\begin{lem}\label{lem:3-design-rank1}
If $\rank{A} = 1$, then
\begin{align*}
  b_0 &= q^{m - 1}(q - 1)(q - 2), \\
  b_1 &= 3q^{m - 1}(q - 1), \\
  b_2 &= 0, \\
  b_3 &= q^{m - 1}.
\end{align*}
\end{lem}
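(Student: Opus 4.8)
The plan is to follow the same bookkeeping as in the proof of Lemma~\ref{lem:3-design-rank2}: compute each $b_{i,j}$ and then sum $b_i = b_{i,0} + \sum_{j \in \mathbb{F}_q \setminus \{0\}} b_{i,j}$. The essential new feature is the degeneracy forced by $\rank{A} = 1$. Since $u_1, u_2$ are nonzero and linearly dependent, I would first write $u_2 = \alpha u_1$ for some $\alpha \in \mathbb{F}_q \setminus \{0\}$, and note that $\alpha \neq 1$ because $u_1 \neq u_2$. For a functional $c \in V^*$ I then set $s := c(u_1)$, so that the triple of relevant evaluations is $(c(0), c(u_1), c(u_2)) = (0, s, \alpha s)$. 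Because $u_1 \neq 0$, the evaluation $c \mapsto c(u_1)$ is a surjective linear map $V^* \to \mathbb{F}_q$ with kernel of cardinality $q^{m-1}$, so each value $s \in \mathbb{F}_q$ is attained by exactly $q^{m-1}$ functionals $c$.

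The key observation, which I would isolate at the outset, is that all three evaluations are governed by the single parameter $s$: one has $c(u_1) = 0 \iff c(u_2) = 0 \iff s = 0$, and for $s \neq 0$ the two values $s$ and $\alpha s$ are distinct (here $\alpha \neq 1$ is used) and both nonzero. From this I can read off $n_{j,T}(c)$ directly. For $j = 0$: $n_{0,T}(c) = 3$ when $s = 0$ and $n_{0,T}(c) = 1$ when $s \neq 0$, so in particular the value $2$ never occurs. For $j \neq 0$: $n_{j,T}(c) = 1$ exactly when $s \in \{j, \alpha^{-1}j\}$ (two distinct values, again using $\alpha \neq 1$) and $n_{j,T}(c) = 0$ otherwise, so the values $2$ and $3$ never occur.

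Translating these into counts via the fiber size $q^{m-1}$ then gives, for $j \neq 0$, $b_{0,j} = (q-2)q^{m-1}$, $b_{1,j} = 2q^{m-1}$, and $b_{2,j} = b_{3,j} = 0$, while for $j = 0$ it gives $b_{1,0} = (q-1)q^{m-1}$, $b_{3,0} = q^{m-1}$, and $b_{0,0} = b_{2,0} = 0$. Summing over $j$ yields the stated values of $b_0, b_1, b_2, b_3$. I do not expect any genuine obstacle here; the only point requiring care is to invoke $\alpha \neq 1$ consistently, since it is exactly this inequality that forces $b_2 = 0$ — no single nonzero value can be hit simultaneously by both $c(u_1)$ and $c(u_2)$ — and that distinguishes the rank-$1$ computation from the rank-$2$ one.
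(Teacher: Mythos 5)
Your proposal is correct and follows essentially the same route as the paper: the paper picks a basis ${}^t[a\ b]$ of $\Im{A}$ with $a,b$ nonzero and $a\neq b$, which is exactly your parametrization $(s,\alpha s)$ with $\alpha\neq 0,1$, and then counts the fibers of size $q^{m-1}$ to get the same $b_{i,j}$ values (the paper obtains $b_{0,j}$ by subtraction rather than directly, a cosmetic difference). Your emphasis on $\alpha\neq 1$ forcing $b_{2}=0$ is precisely the role played by $a\neq b$ in the paper's argument.
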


\begin{proof}
Because $u_1$ and $u_2$ are not equal to $0$ or each other, 
there exist $a, b \in \mathbb{F}_q \setminus \{0\}$ such that $a \neq b$ and
 $\{{}^t[a\ b] \}$ is a basis of $\Im{A}$.
Hence, for any $v \in \Im{A}$,
\begin{equation*}
  |A^{-1}(v)| = |\Ker{A}| = q^{\dim{\Ker{A}}} = q^{m - \rank{A}} = q^{m - 1}.
\end{equation*}
Then, for $j \in \mathbb{F}_q \setminus \{0\}$, we have
\begin{align*}
  b_{1, j} &= |A^{-1}({}^t[j\ ja^{-1}b])| + |A^{-1}({}^t[jb^{-1}a\ j])| = 2q^{m - 1},\\
  b_{2, j} &= b_{3, j} = 0, \\
  b_{0, j} &= |V| - (b_{1, j} + b_{2, j} + b_{3, j}) = q^m - 2q^{m - 1}
\end{align*}
and 
\begin{align*}
  b_{1, 0} &= (q - 1)q^{m - 1},\\
  b_{0, 0} &= b_{2, 0} = 0, \\
  b_{3, 0} &= |\Ker{A}| = q^{m - 1}.
\end{align*}
Thus, since $b_i = \sum_{j \in \mathbb{F}_q} b_{i, j}$, we obtain the desired results.
\end{proof}

Using these lemmas, we give a proof of Theorem~\ref{thm:3-design}.

\begin{proof}[Proof of Theorem~\ref{thm:3-design}]
  By Lemmas~\ref{lem:b-to-a} and (\ref{eq:Jacobi}), (1) follows from Lemma~\ref{lem:3-design-rank2}, and (2) follows from Lemma~\ref{lem:3-design-rank1}.  
\end{proof}

Then, we show that the shells of $RM_q(1, m)$ are not $3$-designs if $q \geq 3$ and $m \geq 2$.

\begin{proof}[Proof of Corollary~\ref{cor:3-design}]
  Let $C = RM_q(1, m)$. We give a proof relying on the properties of Jacobi polynomials. Let 
  $T_1 = \{ 0, u_1, u_2 \} \in \binom{V}{3}$, $T_2 = \{ 0, v_1, v_2 \} \in \binom{V}{3}$, $A_1 = {}^t[u_1\ u_2]$,
  and $A_2 = {}^t[v_1 \ v_2]$.
  We assume that $\rank{A_1} = 2$ and $\rank{A_2} = 1$. Indeed, if $q \geq 3$ and $m \geq 2$, there exist such $T_1, T_2$.
  By Theorem~\ref{thm:3-design},
  \begin{equation*}
       J_{C,T_1} - J_{C,T_2} 
      = -q^{m-2}(q-1)x^{q^{m-1}-3}y^{(q-1)q^{m-1}-3}(wy - xz)^3.
  \end{equation*}
  Since the coefficient of $z^3x^{q^m-\ell}y^{\ell-3}$ in $J_{C, T_1} - J_{C, T_2}$ is non-zero whenever $C_\ell$ is
  non-empty, $C_\ell$ is not a $3$-design.
\end{proof}

 By using Theorem \ref{thm:dual-code}, we obtain
\begin{align*}
J_{C^\perp, T_1} - J_{C^\perp, T_2} 
= (q - 1)\{x + (q - 1)y\}^{q^{m-1}-3}(x - y)^{(q-1)q^{m-1}-3}(wy - xz)^3.
\end{align*}
Based on this equation, we conjecture the following.

\begin{conj}
  Let $C = RM_q(1, m)$. If $q \geq 3$, then for any $\ell \in \mathbb{N}$, 
  $(C^\perp)_\ell$ is not a combinatorial $3$-design.
\end{conj}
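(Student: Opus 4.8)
The plan is to run the argument of Corollary~\ref{cor:3-design} in the dual, using the explicit difference already recorded just before the conjecture. First I would observe that for $|T|=3$ the polynomial $J_{C,T}$ depends only on $\rank{A}\in\{1,2\}$ (Theorem~\ref{thm:3-design}), so by Theorem~\ref{thm:dual-code} the same is true of $J_{C^\perp,T}$. Hence there are only two ``types'' of $T$ of size $3$, and by Proposition~\ref{prop:design-Jacobi} the shell $(C^\perp)_\ell$ is a $3$-design if and only if the coefficient of $z^3x^{q^m-\ell}y^{\ell-3}$ in $J_{C^\perp,T}$ is the same in the rank-$1$ and rank-$2$ cases. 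So it suffices to read this single coefficient off the displayed difference $J_{C^\perp,T_1}-J_{C^\perp,T_2}$ and show it is nonzero for every relevant $\ell$.

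Extracting the coefficient is routine. The $z^3$-term of $(wy-xz)^3$ is $-x^3z^3$, so the coefficient of $z^3x^{q^m-\ell}y^{\ell-3}$ in the difference equals $-(q-1)\,c_\ell$, where
\[
  c_\ell := [\,y^{\ell-3}\,]\;\bigl(1+(q-1)y\bigr)^{q^{m-1}-3}(1-y)^{(q-1)q^{m-1}-3}.
\]
This $c_\ell$ is an integer of Krawtchouk type. Thus the conjecture is \emph{equivalent} to the purely combinatorial statement that $c_\ell\neq 0$ for every $\ell$ with $3\le\ell$ and $(C^\perp)_\ell\neq\emptyset$, after discarding the excluded trivial (all-$\Omega$) design.

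The main obstacle is the non-vanishing of $c_\ell$, and it splits into two genuinely different regimes. In the bulk range $3\le\ell\le q^m-3$ one must show $c_\ell\neq 0$; since $c_\ell$ is a Krawtchouk value at an integer argument, this is a question about integral zeros of Krawtchouk polynomials, which I would attack by a sign/unimodality analysis of the coefficient sequence of $\bigl(1+(q-1)y\bigr)^{q^{m-1}-3}(1-y)^{(q-1)q^{m-1}-3}$ or by known bounds localizing its zeros. The delicate regime is the top of the spectrum: the above product has degree exactly $q^m-6$, so $c_\ell=0$ identically for $\ell\in\{q^m-2,\,q^m-1,\,q^m\}$. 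There the Jacobi criterion gives \emph{no} information, and these weights must be handled separately.

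I expect this last point to decide the matter. For $\ell=q^m$ the only codewords are scalar multiples of $\mathbf 1$, whose supports all equal $\Omega$, so the design is the excluded one. For $\ell=q^m-1$ and $\ell=q^m-2$, however, one must determine whether $C^\perp=RM_q((q-1)m-2,m)$ contains words vanishing at exactly one or exactly two points. This is subtle: since $AGL(m,q)$ is transitive on $V$, if a weight-$(q^m-1)$ word exists at all then the multiplicities $N_p$ of its unique zero are constant in $p$, the blocks are the complements $\Omega\setminus\{p\}$, and $(C^\perp)_{q^m-1}$ is automatically a $3$-design (indeed a $t$-design for every $t$), so the conjecture would fail. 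One therefore needs to prove that no function of degree $\le(q-1)m-2$ has a single zero. Testing small parameters immediately raises a warning: for $q=3$, $m=2$ the quadratic $x^2+y^2\in RM_3(2,2)=C^\perp$ vanishes only at the origin, making $(C^\perp)_{q^m-1}$ a nontrivial $3$-design. This indicates that the conjecture should be stated with the restriction $3\le\ell\le q^m-3$, and that the remaining work is precisely (i) the Krawtchouk non-vanishing on $[3,q^m-3]$ and (ii) a classification of the extreme weights of $C^\perp$.
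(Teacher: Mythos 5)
The first thing to say is that the paper does not prove this statement: it is presented as a conjecture, supported only by the displayed formula for $J_{C^\perp,T_1}-J_{C^\perp,T_2}$ and by computer verification for small parameters, so there is no proof of record to compare yours against. Your reduction is the right one and matches the paper's implicit reasoning: by Proposition~\ref{prop:Jacobi-inv} and Theorem~\ref{thm:3-design} there are only two types of $3$-subset $T$, Theorem~\ref{thm:dual-code} transports this to $C^\perp$, and Proposition~\ref{prop:design-Jacobi} reduces the conjecture to the non-vanishing of
\[
  c_\ell=[\,y^{\ell-3}\,]\,\bigl(1+(q-1)y\bigr)^{q^{m-1}-3}(1-y)^{(q-1)q^{m-1}-3}
\]
for the relevant $\ell$. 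But that non-vanishing is precisely the open content of the conjecture; you name it as a problem about integral zeros of Krawtchouk-type polynomials and gesture at sign or unimodality arguments without carrying any of them out, so as a proof the proposal is incomplete at exactly the point where all the difficulty lives.

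Your discussion of the top weights, however, is a genuine contribution and appears to show that the conjecture is false as literally stated. Since the $z^3$-part of $(wy-xz)^3$ is $-x^3z^3$, every $z^3$-monomial in $J_{C^\perp,T_1}-J_{C^\perp,T_2}$ has $x$-degree at least $3$; hence for $\ell\in\{q^m-1,\,q^m-2\}$ the coefficient of $z^3x^{q^m-\ell}y^{\ell-3}$ agrees for the two types of $T$, and Proposition~\ref{prop:design-Jacobi} then forces $(C^\perp)_\ell$ to be a $3$-design whenever it is non-empty. And it can be non-empty: for $q=3$, $m=2$ the codeword $x^2+y^2\in RM_3(2,2)=C^\perp$ vanishes only at the origin and so has weight $8=q^m-1$ (more generally, for $q\ge3$ and $m\ge2$ the norm form of $\mathbb{F}_{q^m}/\mathbb{F}_q$ has coordinate degree $m\le m(q-1)-2$ and a unique zero). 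By point-transitivity of the affine group on positions, the blocks of $(C^\perp)_{q^m-1}$ are the complements of single points with constant multiplicity, which is a $3$-design and is not the excluded all-$\Omega$ design. This contradicts the reported computational verification and indicates that the conjecture should carry the restriction $3\le\ell\le q^m-3$; within that range your proposal, like the paper, leaves the statement open.
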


We verified this conjecture for $q$, $m$ which satisfying $q^{2m} < 10^9$.
The computations were performed using the code available at GitHub\footnote[1]{https://github.com/yama821/GRMJacobi-paper}.

Finally, we claim that the shells of 
$RM_q(1, m)$ are $3$-$(v,k,(\lambda_1, \lambda_2))$-designs.
\begin{proof}[Proof of Corollary~\ref{cor:3-gen-design}]
  It is clear from Theorem~\ref{thm:3-design}.
\end{proof}

\section{Proof of Theorem~\ref{thm:4-design}}\label{sec:4-design}
In this section, we give proofs of Theorem~\ref{thm:4-design} and Corollary~\ref{cor:4-gen-design} 
using the notation introduced in Section~\ref{subsec:notations}. Let $T = \{0, u_1, u_2, u_3\} \in \binom{V}{4}$, 
$A = {}^t[u_1\ u_2\ u_3]$.

Considering $A$ as a linear map from $V$ to $\mathbb{F}_q^3$, 
for all $j \in \mathbb{F}_q\setminus \{0\}$, we have
\begin{equation}\label{eq:b_ij}
  \begin{split}
  b_{i, j} &= |\{ c \in V \mid i \text{ elements of $Ac$ are equal to }j\}| \\
  &= q^{m - \rank{A}} \times |\{ v \in \Im{A} \mid i \text{ elements of $v$ are equal to } j\}|,
  \end{split}
\end{equation}
and for $j = 0$, we have
\begin{equation}\label{eq:b_i0}
  \begin{split}
  b_{i, 0} &= |\{ c \in V \mid i - 1 \text{ elements of $Ac$ are equal to } 0\}| \\
           &= q^{m - \rank{A}} \times |\{ v \in \Im{A} \mid i - 1 \text{ elements of $v$ are equal to } 0\}|. 
  \end{split}
\end{equation}
Next, we prepare three lemmas for proving Theorem~\ref{thm:4-design}.

\begin{lem}\label{lem:4-design-rank3}
  If $\rank{A} = 3$,
  \begin{gather*}
    a_0 = q^{m - 3} - 1,\ 
    a_1 = 4(q - 1)q^{m - 3} ,\ 
    a_2 = 6(q - 1)^2q^{m - 3},\\
    a_3 = 4(q - 1)^3q^{m - 3},\ 
    a_4 = (q - 1)(q^m - 3q^{m - 1} + 3q^{m - 2} - q^{m - 3} - 1).
  \end{gather*}
\end{lem}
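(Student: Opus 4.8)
The plan is to compute the numbers $b_0,\dots,b_4$ from the explicit formulas (\ref{eq:b_ij}) and (\ref{eq:b_i0}), and then pass to the $a_i$ through Lemma~\ref{lem:b-to-a}. Since $\rank{A}=3$, the map $A\colon V\to\mathbb{F}_q^3$ is surjective, so $\Im{A}=\mathbb{F}_q^3$ and every nonempty fiber has size $|\Ker{A}|=q^{m-3}$. Hence the counts on the right-hand sides of (\ref{eq:b_ij}) and (\ref{eq:b_i0}) are taken over the whole space $\mathbb{F}_q^3$, each multiplied by the uniform fiber size $q^{m-3}$.

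The key combinatorial input is that the number of vectors in $\mathbb{F}_q^3$ with exactly $i$ coordinates equal to a fixed scalar is $\binom{3}{i}(q-1)^{3-i}$: one chooses the $i$ positions carrying that scalar and fills the remaining $3-i$ positions with any of the other $q-1$ values. Applying this with a fixed $j\neq 0$ in (\ref{eq:b_ij}) gives $b_{i,j}=q^{m-3}\binom{3}{i}(q-1)^{3-i}$. For $j=0$ the distinguished point $0\in T$ (which always satisfies $c(0)=0$) forces the index shift recorded in (\ref{eq:b_i0}), so that $b_{i,0}$ counts vectors with exactly $i-1$ coordinates equal to $0$, namely $b_{i,0}=q^{m-3}\binom{3}{i-1}(q-1)^{4-i}$.

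Next I would sum over $j\in\mathbb{F}_q$ via $b_i=b_{i,0}+(q-1)\,b_{i,j}$, using that $b_{i,j}$ is the same for all $q-1$ nonzero $j$. The factor $q-1$ aligns the powers of $q-1$, and Pascal's rule $\binom{3}{i-1}+\binom{3}{i}=\binom{4}{i}$ merges the two contributions into
\[
  b_i=\binom{4}{i}q^{m-3}(q-1)^{4-i},\qquad 0\le i\le 4.
\]
Finally, Lemma~\ref{lem:b-to-a} with $t=4$ gives $a_i=b_{4-i}-\delta_{i,0}-(q-1)\delta_{i,4}$, which produces $a_0,\dots,a_4$ directly; the stated form of $a_4$ follows by expanding $(q-1)^4q^{m-3}-(q-1)=(q-1)\bigl((q-1)^3q^{m-3}-1\bigr)$.

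The only real obstacle is bookkeeping: keeping straight the asymmetry between $j=0$ and $j\neq 0$ created by the point $0\in T$, i.e.\ the shift from $i$ to $i-1$ in (\ref{eq:b_i0}), and correctly combining the single $j=0$ term with the $q-1$ equal nonzero terms so that they assemble into the clean binomial coefficients $\binom{4}{i}$. Once the Pascal-rule merger is in place, everything else is a routine computation.
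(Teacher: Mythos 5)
Your proposal is correct and follows essentially the same route as the paper: compute $b_{i,j}$ from (\ref{eq:b_ij}) and (\ref{eq:b_i0}) using surjectivity of $A$ and the uniform fiber size $q^{m-3}$, sum over $j$ to get the $b_i$, and convert via Lemma~\ref{lem:b-to-a}. Your packaging of the sums into $b_i=\binom{4}{i}q^{m-3}(q-1)^{4-i}$ via Pascal's rule is a tidier presentation of the same term-by-term computation the paper carries out, and all resulting values agree.
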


\begin{proof}
  By (\ref{eq:b_ij}) and (\ref{eq:b_i0}), 
  \begin{align*}
    b_0 &= b_{0, 0} + \sum_{j \in \mathbb{F}_q \setminus \{0\}} b_{0, j} 
        = (q - 1) \times (q - 1)^3 \times q^{m - 3}
        = (q - 1)^4q^{m - 3}, \\
    b_1 &= b_{1, 0} + \sum_{j \in \mathbb{F}_q \setminus \{0\}} b_{1, j} 
         = (q - 1)^3q^{m - 3} + 3(q - 1)^3q^{m - 3}
         = 4(q - 1)^3q^{m - 3}, \\
    b_2 &= b_{2, 0} + \sum_{j \in \mathbb{F}_q \setminus \{0\}} b_{2, j} 
         = 3(q - 1)^2q^{m - 3} + 3(q - 1)^2q^{m - 3} 
         = 6(q - 1)^2q^{m - 3}, \\
    b_3 &= b_{3, 0} + \sum_{j \in \mathbb{F}_q \setminus \{0\}} b_{3, j} 
         = 3(q - 1)q^{m - 3} + (q - 1)q^{m - 3} 
         = 4(q - 1)q^{m - 3}, \\
    b_4 &= b_{4, 0} + \sum_{j \in \mathbb{F}_q \setminus \{0\}} b_{4, j} 
         = q^{m - 3}.
  \end{align*}
  Therefore, we obtain the desired results by Lemma~\ref{lem:b-to-a}.
\end{proof}

\begin{lem}\label{lem:4-design-rank1}
  If $\rank{A} = 1$, 
  \begin{gather*}
    a_0 = q^{m - 1} - 1,\ 
    a_1 = 0,\
    a_2 = 0,\\
    a_3 = 4(q - 1)q^{m - 1} ,\
    a_4 = (q - 1)(q^m - 3q^{m - 1} - 1).
  \end{gather*}
\end{lem}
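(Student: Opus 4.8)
The plan is to compute the quantities $b_i$ directly from the structural formulas (\ref{eq:b_ij}) and (\ref{eq:b_i0}), exactly as in the proof of Lemma~\ref{lem:4-design-rank3}, and then convert to the $a_i$ via Lemma~\ref{lem:b-to-a}. The key simplification in the rank-$1$ case is that $\Im{A}$ is one-dimensional, so every vector $v = Ac$ in the image is a scalar multiple of a fixed nonzero generator ${}^t[\alpha_1\ \alpha_2\ \alpha_3]$ with all $\alpha_i \neq 0$ (since $u_1, u_2, u_3$ are each nonzero as functionals). Thus I first observe that $\dim \Ker{A} = m - 1$, so the leading factor in (\ref{eq:b_ij}) and (\ref{eq:b_i0}) is $q^{m-1}$, and $|\Im{A}| = q$.

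Next I would enumerate, for each scalar $\mu \in \mathbb{F}_q$, how many coordinates of $\mu\,{}^t[\alpha_1\ \alpha_2\ \alpha_3]$ equal a given target value. The crucial point is that because all three $\alpha_i$ are nonzero, the three coordinates of any nonzero $v \in \Im{A}$ are all nonzero, and in fact the only vector in $\Im{A}$ with any zero coordinate is $v = 0$ (where all three coordinates vanish). This forces the counting to collapse: for $j \neq 0$, a vector $v \in \Im{A}$ having exactly $i$ coordinates equal to $j$ can only happen for the single scalar $\mu$ (if any) making $\mu\alpha_1 = \mu\alpha_2 = \mu\alpha_3 = j$ simultaneously, which is impossible unless $\alpha_1 = \alpha_2 = \alpha_3$, or for configurations where some but not all coordinates match — but since $v$ has all equal coordinates only when they collapse through a common scalar. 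I would therefore treat $j \neq 0$ and $j = 0$ separately, using (\ref{eq:b_i0}) for the latter, noting that $i-1$ coordinates being zero forces $v = 0$ whenever $i - 1 \geq 1$. This should yield $b_0, b_1, b_2, b_3, b_4$ in closed form; in particular I expect $b_1 = b_2 = 0$ to emerge because intermediate partial matches cannot occur when the image line has all-nonzero, proportionally-locked coordinates.

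From the resulting $b_i$, Lemma~\ref{lem:b-to-a} gives $a_i = b_{4-i} - \delta_{i,0} - (q-1)\delta_{i,4}$, and I would simply read off the five stated values, checking consistency via $\sum_i a_i = q^{m+1} - q - 1$ (the total number of nonconstant affine functions), which matches the weight enumerator (\ref{weight}).

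The main obstacle I anticipate is the careful bookkeeping of the coordinate-coincidence patterns for a rank-$1$ image: one must correctly distinguish the generic subcase (the $\alpha_i$ pairwise distinct, so each nonzero scalar multiple contributes to $a_3$) from degenerate subcases where two or three of the $\alpha_i$ coincide, and verify that the final $a_i$ are the same in all of them — otherwise Proposition~\ref{prop:design-Jacobi} would already detect non-uniformity. Confirming that $a_1 = a_2 = 0$ robustly across these subcases, and that the coefficient of $a_3$ really is $4(q-1)q^{m-1}$ rather than splitting according to how many $\alpha_i$ agree, is the delicate step; I would resolve it by arguing that a rank-$1$ map sends $T$ to a single nonzero line, so $\mathrm{wt}(c|_T)$ for a noncontant $c$ is always either $3$ (when $c$ is nonconstant on the line) giving the $a_3$ contribution, or $0$ (when $c$ is constant on $T$ but not globally) contributing to $a_4$, with no way to realize weight $1$ or $2$ on $T$.
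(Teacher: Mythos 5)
Your overall strategy --- compute the $b_{i,j}$ from the structure of $\Im{A}$ via (\ref{eq:b_ij}) and (\ref{eq:b_i0}), sum to get the $b_i$, and convert through Lemma~\ref{lem:b-to-a} --- is exactly the paper's route. However, the step you yourself single out as delicate is left unresolved, and your proposed resolution of it is incorrect. The missing observation is that the coordinates $\alpha_1,\alpha_2,\alpha_3$ of the generator of $\Im{A}$ are not merely nonzero but \emph{pairwise distinct}: $\rank{A}=1$ means $u_1,u_2,u_3$ lie on a common line through the origin, say $u_i=\alpha_i u$, and since the $u_i$ are three distinct nonzero points of $T$, the scalars $\alpha_i$ are nonzero and pairwise distinct. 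Consequently every nonzero $v\in\Im{A}$ has three pairwise distinct nonzero coordinates; hence for $j\neq 0$ no $v$ has two or more coordinates equal to $j$, exactly three distinct nonzero $v$ have exactly one coordinate equal to $j$, and the only $v$ with any zero coordinate is $v=0$. This is precisely what forces $b_{1,j}=3q^{m-1}$, $b_{2,j}=b_{3,j}=b_{4,j}=0$, and the corresponding values at $j=0$, whence $a_1=a_2=0$. Your worry about ``degenerate subcases where two or three of the $\alpha_i$ coincide'' signals that you have not seen this: those subcases cannot occur, and if they could the lemma would be false (with $\alpha_1=\alpha_2$ one can produce codewords of weight $2$ on $T$, so $a_2\neq 0$), so a plan to ``verify that the final $a_i$ are the same in all of them'' would fail rather than succeed.

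Your closing argument is also wrong as stated. A nonconstant codeword $c=\lambda+b$ with $\lambda(u)\neq 0$ takes on $T$ the four values $b,\ b+\alpha_1\lambda(u),\ b+\alpha_2\lambda(u),\ b+\alpha_3\lambda(u)$, which are pairwise distinct because $0,\alpha_1,\alpha_2,\alpha_3$ are; at most one of them can vanish, so $\wt{c|_T}$ is $3$ or $4$, not always $3$. And a codeword with $\wt{c|_T}=0$ contributes to $a_0$, not to $a_4$; the weight-$4$ contributions come both from codewords constant and nonzero on $T$ (those with $\lambda(u)=0$, $b\neq 0$) and from codewords nonconstant on $T$ all of whose four values are nonzero. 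Once the pairwise-distinctness observation is in place, the rest of your computation goes through and reproduces the paper's proof.
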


\begin{proof}
  Since $u_1, u_2, u_3$ are different from each other and not equal to $0$, we take
  a basis $\{{}^t[a\ b\ c]\}$ of $\Im{A} $, where $a, b, c \in \mathbb{F}_q\setminus\{0\}$ 
  and are different from each other. If $\rank{A} = 1$, then for all $v \in \Im{A}$, $|A^{-1}(v)| = q^{m-1}$. 
  Then, for all $j \in \mathbb{F}_q\setminus\{0\}$, 
  \begin{align*}
    &b_{1, j} = 3 \times q^{m - 1} = 3q^{m - 1},\\
    &b_{2, j} = b_{3, j} = b_{4, j} = 0,\\
    &b_{0, j} = q^m - (b_{1, j} + b_{2, j} + b_{3, j} + b_{4, j}) = q^m - 3q^{m - 1}
  \end{align*}
  and 
  \begin{align*}
    &b_{1, 0} = (q - 1) \times q^{m - 1}, \\
    &b_{0, 0} = b_{2, 0} = b_{3, 0} = 0, \\
    &b_{4, 0} = q^{m - 1}.
  \end{align*}
  Therefore, $b_0,\dots,b_4$ are written as follows:
  \begin{align*}
    b_0 &= b_{0, 0} + \sum_{j \in \mathbb{F}_q \setminus \{0\}} b_{0, j} 
        = (q - 1) \times (q^m - 3q^{m - 1})
        = (q - 1)(q - 3)q^{m - 1}, \\
    b_1 &= b_{1, 0} + \sum_{j \in \mathbb{F}_q \setminus \{0\}} b_{1, j} 
         = (q - 1)q^{m - 1} + (q - 1) \times 3q^{m - 1} 
         = 4(q - 1)q^{m - 1}, \\
    b_2 &= b_{2, 0} + \sum_{j \in \mathbb{F}_q \setminus \{0\}} b_{2, j} 
         = 0, \\
    b_3 &= b_{3, 0} + \sum_{j \in \mathbb{F}_q \setminus \{0\}} b_{3, j} 
         = 0, \\
    b_4 &= b_{4, 0} + \sum_{j \in \mathbb{F}_q \setminus \{0\}} b_{4, j} 
         = q^{m - 1}.
  \end{align*}
  Thus, we obtain $a_0,\dots,a_4$ by using Lemma~\ref{lem:b-to-a}.
\end{proof}

Before stating the lemma under the condition of $\rank{A} = 2$, we give a basis of 
$\Im{A}$. Since $\Im{A}$ is invariant under the right multiplication of an invertible
matrix to $A$, we confine a basis of $\Im{A}$ to the following:
\begin{equation*}
  \left\{\begin{bmatrix}1\\0\\a\end{bmatrix}, \begin{bmatrix}0\\1\\b\end{bmatrix}\right\}, 
  \left\{\begin{bmatrix}1\\a\\0\end{bmatrix}, \begin{bmatrix}0\\0\\1\end{bmatrix}\right\}, 
  \left\{\begin{bmatrix}0\\1\\0\end{bmatrix}, \begin{bmatrix}0\\0\\1\end{bmatrix}\right\},
\end{equation*}
where $a, b \in \mathbb{F}_q$. However, since $u_1, u_2, u_3$ are not equal to $0$, the last one is excluded.
Without loss of generality, we take the first one because $b_{i, j}$ is invariant under a permutation on indices of codewords.
Note that we have $(a, b) \neq (0,0), (1,0), (0,1)$ because $u_1, u_2, u_3 \neq 0$. 
\begin{lem}\label{lem:4-design-rank2}
  If $\rank{A} = 2$, we have the following.
  \begin{enumerate}
    \item If $a + b = 1$ or $ab = 0$, then
          \begin{gather*}
            a_0 = q^{m - 2} - 1,\ 
            a_1 = q^{m - 2}(q - 1),\ 
            a_2 = 3q^{m - 2}(q - 1),\\
            a_3 = q^{m - 2}(q - 1)(4q - 5),\ 
            a_4 = (q - 1)(q^m - 3q^{m - 1} + 2q^{m - 2} - 1).
          \end{gather*}
    \item If $a + b \neq 1$ and $ab \neq 0$, then
          \begin{gather*}
            a_0 = q^{m - 2} - 1,\
            a_1 = 0,\
            a_2 = 6q^{m - 2}(q - 1),\\
            a_3 = q^{m - 2}(q - 1)(4q - 8),\
            a_4 = (q - 1)(q^m - 3q^{m - 1} + 3q^{m - 2} - 1).
          \end{gather*}
  \end{enumerate}
\end{lem}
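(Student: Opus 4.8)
The plan is to compute the five numbers $b_0,\dots,b_4$ directly from (\ref{eq:b_ij}) and (\ref{eq:b_i0}) and then pass to $a_0,\dots,a_4$ via Lemma~\ref{lem:b-to-a}, exactly as in the proofs of Lemmas~\ref{lem:4-design-rank3} and \ref{lem:4-design-rank1}. Since $\rank{A}=2$, every fiber $A^{-1}(v)$ with $v\in\Im{A}$ has size $q^{m-2}$, so it suffices to count, for each $j\in\mathbb{F}_q$, the vectors $v\in\Im{A}$ having a prescribed number of coordinates equal to $j$. Using the chosen basis, a general element of $\Im{A}$ is $v=(s,t,as+bt)$ with $(s,t)\in\mathbb{F}_q^2$, and the three conditions $v_1=j$, $v_2=j$, $v_3=j$ become the affine lines $s=j$, $t=j$, and $as+bt=j$ in the plane $\mathbb{F}_q^2$.

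First I would count, for fixed $j$, the number of $(s,t)$ lying on exactly $i$ of these three lines by inclusion--exclusion. Each single line carries $q$ points, so the only inputs that vary are the three pairwise intersection numbers and the triple intersection number. The pair $\{s=j\}\cap\{t=j\}$ always meets in the single point $(j,j)$; whether the third line passes through $(j,j)$ is governed by $j(a+b-1)=0$, and whether the other two pairwise intersections are single points, empty, or an entire line is governed by whether $a$ or $b$ vanishes. This is precisely why the statement splits according to whether $a+b=1$ or $ab=0$ holds: these are exactly the degeneracies that make the three lines concurrent or force one of them to be a coordinate axis, and they behave differently for $j=0$ (where $as+bt=0$ automatically passes through the origin) than for $j\neq0$.

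Next I would assemble $b_{i,j}$ from these counts through (\ref{eq:b_ij}) and (\ref{eq:b_i0}) --- recalling that the point $0\in T$ shifts the bookkeeping at $j=0$ by one, as in (\ref{eq:b_i0}) --- and then sum over $j$, observing that all $q-1$ nonzero values of $j$ give identical counts, so that $b_i=b_{i,0}+(q-1)b_{i,j_0}$ for any fixed $j_0\neq0$. Applying Lemma~\ref{lem:b-to-a} then yields $a_0,\dots,a_4$ in each case.

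The bookkeeping is the main obstacle. Although the per-$j$ distributions genuinely differ between the subcases of part (1) --- for example, for $j\neq0$ the concurrent case $a+b=1$ contributes a vector to ``three coordinates equal $j$,'' whereas the degenerate case $a=0$ instead contributes to ``two coordinates equal $j$'' --- one must check that after summing over $j$ these differences cancel, so that every configuration with $a+b=1$ or $ab=0$ produces the same $b_i$ and hence the single answer in part (1). Here I would also use that the two possibilities $a=0$ and $b=0$ inside $ab=0$ are exchanged by the $u_1\leftrightarrow u_2$ symmetry, under which $b_{i,j}$ is invariant, and that, under the standing exclusions $(a,b)\neq(0,0),(1,0),(0,1)$, the conditions $a+b=1$ and $ab=0$ are mutually exclusive; this makes the two parts of the lemma well defined and exhaustive.
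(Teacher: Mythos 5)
Your proposal is correct and follows essentially the same route as the paper: parametrize $\Im{A}$ by $(s,t)\mapsto(s,t,as+bt)$, count points of $\mathbb{F}_q^2$ on the three lines $s=j$, $t=j$, $as+bt=j$ by inclusion--exclusion with the case split governed by $a+b=1$ and $ab=0$, handle $j=0$ separately via (\ref{eq:b_i0}), sum over $j$, and finish with Lemma~\ref{lem:b-to-a}; you also correctly flag the one subtle point, namely that the per-$j$ counts differ between the $a+b=1$ and $ab=0$ subcases but agree after summation. The only cosmetic difference is that you would compute $b_2$ directly from the ``exactly two lines'' count, whereas the paper sidesteps it by obtaining $a_2$ from $|C\setminus\mathbb{F}_q\mathbf{1}|-(a_0+a_1+a_3+a_4)$.
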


\begin{proof}
First, we give $b_0$. Clearly, $b_{0, 0} = 0$. For all $j \in\mathbb{F}_q\setminus\{0\}$, 
\begin{align*}
  b_{0, j} &= q^{m - \rank{A}} \times |\{ v \in \Im{A} \mid \text{ zero elements of $v$ are equal to } j\}|\\
           &= q^{m-2} \times |\{ (c_1, c_2) \in \mathbb{F}_q^2 \mid c_1 \neq j,\, c_2 \neq j,\, c_1a+c_2b\neq j\}|.
\end{align*}
We have 
\begin{align*}
   |\{(c_1, c_2)&\in\mathbb{F}_q^2\mid c_1 \neq j, c_2 \neq j, c_1a+c_2b \neq j\}| \\
 = (q - 1)^2 &- |\{(c_1, c_2)\in\mathbb{F}_q^2\mid c_1 \neq j, c_2 \neq j, c_1a+c_2b = j\}|\\
 = (q - 1)^2 &- (
   |\{(c_1, c_2)\in\mathbb{F}_q^2\mid c_1a+c_2b = j\}| \\
 &- |\{(c_1, c_2)\in\mathbb{F}_q^2\mid c_1 = j, c_1a+c_2b = j\}| \\
 &- |\{(c_1, c_2)\in\mathbb{F}_q^2\mid c_2 = j, c_1a+c_2b = j\}| \\
 &+ |\{(c_1, c_2)\in\mathbb{F}_q^2\mid c_1 = j, c_2 = j, c_1a+c_2b = j\}|
 ).
\end{align*}
If $a = 0$, then $b \neq 0, 1$, and if $b = 0$, then $ a \neq 0, 1$. Thus, 
\begin{align*}
  |\{(c_1, c_2)\in\mathbb{F}_q^2\mid c_1a+c_2b = j\}| &= q,\\
  |\{(c_1, c_2)\in\mathbb{F}_q^2\mid c_1 = j, c_1a+c_2b = j\}| &=
  \left\{\begin{array}{ll}
    1 & \text{if } b \neq 0, \\
    0 & \text{if } b = 0,
  \end{array}\right.\\
  |\{(c_1, c_2)\in\mathbb{F}_q^2\mid c_2 = j, c_1a+c_2b = j\}| &=
  \left\{\begin{array}{ll}
    1 & \text{if } a \neq 0, \\
    0 & \text{if } a = 0,
  \end{array}\right.\\
  |\{(c_1, c_2)\in\mathbb{F}_q^2\mid c_1 = j, c_2 = j, c_1a+c_2b = j\}| &= 
  \left\{\begin{array}{ll}
    1 & \text{if } a + b = 1, \\
    0 &  \text{otherwise}.
  \end{array}\right.
\end{align*}
Therefore, 
\begin{align*}
  |\{(c_1, c_2)\in\mathbb{F}_q^2\mid c_1 \neq j, c_2 \neq j, c_1a+c_2b \neq j\}|\\
  = \left\{\begin{array}{ll}
    (q - 1)^2 - (q - 1) & \text{if } ab = 0 \text{ or } a + b = 1 ,\\
    (q - 1)^2 - (q - 2) & \text{otherwise}.
  \end{array}\right.
\end{align*}
Hence,
\begin{align*}
  b_0 &= b_{0, 0} + \sum_{j \in \mathbb{F}_q \setminus\{0\}} b_{0, j} \\
  &= \left\{\begin{array}{ll}
    \sum_{j \in \mathbb{F}_q \setminus\{0\}} q^{m - 2}(q - 1)(q - 2)         & \text{if } a + b = 1 \text{ or } ab = 0 \\
    \sum_{j \in \mathbb{F}_q \setminus\{0\}} q^{m - 2}(q^2 - 3q + 3)  & \text{otherwise}
  \end{array}\right.\\
  &= \left\{\begin{array}{ll}
    q^{m - 2}(q - 1)^2(q - 2)          & \text{if } a + b = 1 \text{ or } ab = 0, \\
    q^{m - 2}(q - 1)(q^2 - 3q + 3)     & \text{otherwise}.
  \end{array}\right.
\end{align*}
Second, we compute $b_1$. Similarly, 
\begin{equation*}
  b_{1, 0} = \left\{\begin{array}{ll}
    q^{m-2}(q - 1)^2           & \text{if } ab = 0,\\
    q^{m-2}(q - 1)(q - 2)      & \text{otherwise}.
  \end{array}\right.
\end{equation*}
Fix $j \in \mathbb{F}_q\setminus\{0\}$, and let $I_1$, $I_2$, and $I_3$ be non-negative numbers such that
\begin{gather*}
  I_1 = |\{ (c_1, c_2) \in \mathbb{F}_q^2 \mid c_1 = j, c_2 \neq j, ac_1 + bc_2 \neq j\}|, \\
  I_2 = |\{ (c_1, c_2) \in \mathbb{F}_q^2 \mid c_1 \neq j, c_2 = j, ac_1 + bc_2 \neq j\}|,\\
  I_3 = |\{ (c_1, c_2) \in \mathbb{F}_q^2 \mid c_1 \neq j, c_2 \neq j, ac_1 + bc_2 = j\}|. 
\end{gather*}
Then, $b_{1, j} = q^{m - 2} \times (I_1 + I_2 + I_3)$. We have
\begin{align*}
  I_1 &= \left\{\begin{array}{lll}
            q - 1 & \text{if } b = 0 \text{ or } a + b = 1,\\
            q - 2 & \text{otherwise},
          \end{array}\right.\\
  I_2 &= \left\{\begin{array}{lll}
            q - 1 & \text{if } a = 0 \text{ or } a + b = 1,\\
            q - 2 & \text{otherwise},
          \end{array}\right.\\
  I_3 &= \left\{\begin{array}{lll}
            q - 1 & \text{if } ab = 0 \text{ or } a + b = 1,\\
            q - 2 & \text{otherwise},
          \end{array}\right.\\
  I_1 + I_2 + I_3 &=   \left\{\begin{array}{ll}
                          3q - 3             & \text{if } a + b = 1,\\
                          3q - 4             & \text{if } ab = 0,\\
                          3q - 6             & \text{otherwise}.
                        \end{array}\right.
\end{align*}
Therefore,
\begin{align*}
  b_1 &= b_{1, 0} + \sum_{j \in \mathbb{F}_q\setminus\{0\}} b_{1, j} \\
      &= \left\{\begin{array}{ll}
        q^{m - 2}(q - 1)(q - 2) + \sum_{j}q^{m - 2}(3q - 3) & \text{if } a + b = 1, \\
        q^{m - 2}(q - 1)^2 + \sum_{j}q^{m - 2}(3q - 4)      & \text{if } ab = 0, \\
        q^{m - 2}(q - 1)(q - 2) + \sum_{j}q^{m - 2}(3q - 6) & \text{otherwise,}
      \end{array}\right.\\
      &= \left\{\begin{array}{ll}
        q^{m - 2}(q - 1)(q - 2) + (q - 1)q^{m - 2}(3q - 3) & \text{if } a + b = 1, \\
        q^{m - 2}(q - 1)^2 + (q - 1)q^{m - 2}(3q - 4)      & \text{if } ab = 0, \\
        q^{m - 2}(q - 1)(q - 2) + (q - 1)q^{m - 2}(3q - 6) & \text{otherwise,}
      \end{array}\right.\\
      &= \left\{\begin{array}{ll}
        q^{m - 2}(q - 1)(4q - 5) & \text{if } a + b = 1 \text{ or } ab = 0, \\
        q^{m - 2}(q - 1)(4q - 8) & \text{otherwise}.
      \end{array}\right.
\end{align*}
Similarly, we give $b_3$. For all $j \in \mathbb{F}_q\setminus\{0\}$, we have
\begin{equation*}
  b_{3, j} = \left\{\begin{array}{ll}
    q^{m - 2} & \text{if } a + b = 1,\\
    0         & \text{otherwise}.
  \end{array}\right.
\end{equation*}
Let $J_1$, $J_2$, and $J_3$ be non-negative integers such that
\begin{gather*}
  J_1 = |\{ (c_1, c_2) \in \mathbb{F}_q^2 \mid c_1 = 0, c_2 \neq 0, ac_1 + bc_2 \neq 0\}|, \\
  J_2 = |\{ (c_1, c_2) \in \mathbb{F}_q^2 \mid c_1 \neq 0, c_2 = 0, ac_1 + bc_2 \neq 0\}|, \\
  J_3 = |\{ (c_1, c_2) \in \mathbb{F}_q^2 \mid c_1 \neq 0, c_2 \neq 0, ac_1 + bc_2 = 0\}|. 
\end{gather*}
Clearly, $J_3 = 0$. We have
\begin{align*}
  J_1 &= \left\{\begin{array}{ll}
    q - 1 & \text{if } a = 0 ,\\
    0     & \text{otherwise},
  \end{array}\right.\\
  J_2 &= \left\{\begin{array}{ll}
    q - 1 & \text{if } b = 0 ,\\
    0     & \text{otherwise}.
  \end{array}\right.
\end{align*}
Thus,
\begin{align*}
  b_{3, 0} = q^{m - 2}\times(J_1 + J_2 + J_3)
  = \left\{\begin{array}{ll}
    q - 1 & \text{if } ab = 0, \\
    0     & \text{otherwise}.
  \end{array}\right.
\end{align*}
Therefore,
\begin{align*}
  b_3 &= b_{3, 0} + \sum_{j \in \mathbb{F}_q \setminus \{0\}} b_{3, j} \\
      &= \left\{\begin{array}{ll}
        q^{m - 2} (q - 1) & \text{if } a + b = 1 \text{ or } ab = 0,\\
        0                 & \text{otherwise}        ,
      \end{array}\right.
\end{align*}
and we have $b_{4, j} = 0$, where $j \in \mathbb{F}_q\setminus\{0\}$, and $b_{4, 0} = |\Ker{A}| = q^{m-2}$.
Then we have
\begin{equation*}
  b_4 = q^{m-2}.
\end{equation*}
Finally, by using Lemma~\ref{lem:b-to-a}, we obtain $a_0, \dots, a_4$ as follows:
\begin{align*}
  a_0 &= b_4 - 1 \\
      &= q^{m - 2} - 1, \\
  a_1 &= b_3 \\
      &= \left\{\begin{array}{ll}
        q^{m - 2} (q - 1) & \text{if } a + b = 1 \text{ or } ab = 0,\\
        0                 & \text{otherwise},
      \end{array}\right. \\
  a_3 &= b_1 \\
      &= \left\{\begin{array}{ll}
        q^{m - 2}(q - 1)(4q - 5) & \text{if } a + b = 1 \text{ or } ab = 0, \\
        q^{m - 2}(q - 1)(4q - 8) & \text{otherwise},
      \end{array}\right.\\
  a_4 &= b_0 - (q - 1) \\
      &= \left\{\begin{array}{ll}
        q^{m - 2}(q - 1)^2(q - 2) - (q - 1)          & \text{if } a + b = 1 \text{ or } ab = 0, \\
        q^{m - 2}(q - 1)(q^2 - 3q + 3) - (q - 1)     & \text{otherwise},
      \end{array}\right.\\
      &= \left\{\begin{array}{ll}
        (q - 1)(q^m - 3q^{m - 1} + 2q^{m - 2} - 1)          & \text{if } a + b = 1 \text{ or } ab = 0, \\
        (q - 1)(q^m - 3q^{m - 1} + 3q^{m - 2} - 1)          & \text{otherwise},
      \end{array}\right.\\
  a_2 &= |C \setminus \mathbb{F}_q\mathbf{1}| - (a_0 + a_1 + a_3 + a_4) \\
      &= q^{m + 1} - q - (a_0 + a_1 + a_3 + a_4)\\
      &= \left\{\begin{array}{ll}
        3q^{m - 2}(q - 1) & \text{if } a + b = 1 \text{ or } ab = 0, \\
        6q^{m - 2}(q - 1) & \text{otherwise}.
      \end{array}\right.
\end{align*}

\end{proof}

\begin{proof}[Proof of Theorem~\ref{thm:4-design} and Corollary~\ref{cor:4-gen-design}]
  Using Lemmas~\ref{lem:4-design-rank3}, \ref{lem:4-design-rank1}, and \ref{lem:4-design-rank2}, we obtain 
  the desired results.
\end{proof}

\section*{Acknowledgments}
The authors would like to thank the anonymous reviewers for their beneficial comments
on an earlier version of the manuscript.
We are grateful to Professors Tsuyoshi Miezaki and Akihiro Munemasa
for helpful discussions and comments on the manuscript.

\section*{Statements and declarations}
Competing Interests. The authors have no affiliation with any organization with a 
direct or indirect financial interest in the subject matter discussed in the manuscript.


\begin{thebibliography}{999}

\bibitem{assmus-mattson}
E.F.~Assmus,~Jr. and  H.F.~Mattson,~Jr., 
New $5$-designs, 
\emph{J. Combin. Theory} {\bf 6} (1969), 122--151.


\bibitem{MM}
T.~Miezaki and A.~Munemasa,
Jacobi polnomials and harmonic weight enumerators of 
the first-order Reed-Muller codes and the extended hamming codes,
\emph{submitted} (2023).

\bibitem{MMN}
T.~Miezaki, A.~Munemasa, and H.~Nakasora,
A note on Assmus--Mattson type theorems,
\emph{Des. Codes Cryptogr.}, 
{\bf 89} (2021), 843--858.

\bibitem{BMP}
A.~Bonnecaze, B.~Mourrain, P.~Sol\'e, Jacobi polynomials, type~\greektwo~codes, and
designs, \emph{Des. Codes Cryptogr.} {\bf 16} (1999), no. 3, 215-234.

\bibitem{CMOT}
H.S.~Chakraborty, T.~Miezaki, M.~Oura, and Y.~Tanaka, Jacobi polynomials and design theory I, 
\emph{Discrete Mathematics} {\bf 346} (2023), 113339.

\bibitem{Ozeki}
M.~Ozeki, 
On the notion of Jacobi polynomials for codes. 
\emph {Math.~Proc.~Cambridge Philos.~Soc.} {\bf 121} (1997), no.~1, 15--30. 

\bibitem{MN}
T.~Miezaki, H.~Nakasora,
The support designs of the triply even binary codes of length 48,
\emph{J. Combin. Designs}, {\bf 27} (2019), 673-681.

\bibitem{Berger-Charpin}
T.~Berger, P.~Charpin,
The automorphism group of Generalized Reed--Muller codes.
\emph{Discrete Mathematics} {\bf 117} (1993), 1-17.

\bibitem{Magma}
W.~Bosma, J.~Cannon, and C.~Playoust.
The Magma algebra system.~I.\ The user language, 
\emph{J. Symb. Comp.} {\bf 24} (1997), 235--265.

\bibitem{Mathematica}
Wolfram Research, Inc., Mathematica, Version 11.2, Champaign, IL (2017).

\end{thebibliography}
\end{document}